\documentclass[11pt]{article}

\usepackage[a4paper,margin=1in]{geometry}
\usepackage{amsmath,amssymb,amsthm,mathtools}
\usepackage[bookmarks=false]{hyperref}
\hypersetup{hidelinks}
\usepackage{enumitem}
\usepackage{booktabs}
\usepackage{verbatim}
\usepackage{color}
\usepackage{parskip}
\newtheorem{theorem}{Theorem}[section]
\newtheorem{lemma}[theorem]{Lemma}
\newtheorem{proposition}[theorem]{Proposition}
\newtheorem{corollary}[theorem]{Corollary}

\newcommand{\F}{\mathbb{F}}

\title{A Delsarte Linear Programming Approach to the Erd\H{o}s--Falconer Distance Problem over Finite Fields}
\author{Tao Zhang\\
\footnotesize Institute of Mathematics and Interdisciplinary Sciences, Xidian University, Xi'an 710126, China.\\
	}
\date{}

\begin{document}
\maketitle
\begingroup
\renewcommand\thefootnote{}% 去掉编号格式
\footnotetext{E-mail addresses: zhant220@163.com (T. Zhang).}
\addtocounter{footnote}{-1}% 不影响后续脚注编号
\endgroup

\begin{abstract}
	We introduce a Delsarte linear programming approach to the finite field Erd\H{o}s--Falconer distance problem. Let \(q\) be an odd prime power, let \(n\) be even, and let \(Q\) be a non-degenerate quadratic form on \(\mathbb{F}_q^n\). For \(E\subset \mathbb{F}_q^n\), define
	\[
	\Delta_Q(E)=\{Q(x-y):\ x,y\in E\}.
	\]
	We prove that, for every fixed \(0<\alpha<\frac{1}{2}\), there exist constants \(C_\alpha>0\) and \(q_\alpha\) such that if \(q\ge q_\alpha\) and $|E|\ge C_\alpha q^{\frac n2+\frac13},$
	then
	\[
	|\Delta_Q(E)|>1+\alpha(q-1).
	\]
	In particular, \(\Delta_Q(E)\) contains a positive proportion of the elements of \(\mathbb{F}_q\), and hence \(|\Delta_Q(E)|\gg q\).
	
	Our result applies uniformly to all non-degenerate quadratic forms in even-dimensional finite field vector spaces. In the Euclidean case
	\[
	Q(x)=x_1^2+\cdots+x_n^2,
	\]
	it improves, for every even \(n\ge 4\) over arbitrary finite fields, the general exponent \(\frac{n+1}{2}\) obtained by Iosevich and Rudnev to $\frac n2+\frac13.$
	The proof is based on the association scheme arising from the level sets of \(Q\). By analyzing the corresponding eigenvalues through Gauss sums and Kloosterman sums, we construct a suitable feasible solution to the Delsarte linear program. This provides a new algebraic-combinatorial method for obtaining distance set estimates over finite fields.
	
		\medskip
	
	\noindent {{\it Keywords\/}: Erd\H{o}s--Falconer Distance Problem, Delsarte linear programming, Kloosterman sum.}
	
	\smallskip
	
	\noindent {{\it AMS subject classifications\/}: 52C10, 11L40.}
\end{abstract}

\section{Introduction}

For a finite set \(E\subset \mathbb{R}^n\), the classical Erd\H{o}s distance problem asks how small the number of distinct distances determined by \(E\) can be in terms of \(|E|\). A natural continuous analogue is the Falconer distance problem. For a compact set \(E\subset\mathbb{R}^n\), define its Euclidean distance set by
\[
\Delta_{\mathbb{R}^n}(E)
=
\left\{
\sum_{i=1}^n (x_i-y_i)^2:\ x,y\in E
\right\}.
\]
The Falconer conjecture asserts that if $\dim_H(E)>\frac n2,$
then \(\Delta_{\mathbb{R}^n}(E)\) has positive Lebesgue measure. This threshold is expected to be sharp.

Substantial progress has been made on this problem. In the plane, Guth, Iosevich, Ou and Wang \cite{GIOW2020} proved that \(|\Delta_{\mathbb{R}^2}(E)|>0\) whenever $\dim_H(E)>\frac54.$
In higher dimensions, Du, Ou, Ren and Zhang \cite{DORZ23} showed that for \(n\ge 3\), the distance set has positive Lebesgue measure provided $\dim_H(E)>
\frac n2+\frac14-\frac{1}{8n+4}.$
These results illustrate the depth of the distance problem and its close connection with harmonic analysis, geometric measure theory, and incidence geometry.

Finite field analogues of classical problems in harmonic analysis, geometric measure theory, and combinatorics have attracted considerable attention in recent years. On the one hand, the discrete setting often makes the underlying mechanisms more transparent; on the other hand, finite fields introduce new arithmetic phenomena that have no direct analogue over the real numbers. We refer the reader to \cite{BKT2004,IR2007,MT2004,W1999} and the references therein for related developments. In this paper, we study the finite field analogue of the Erd\H{o}s--Falconer distance problem.

Let \(\mathbb{F}_q\) be a finite field with \(q\) elements, where \(q\) is an odd prime power, and let \(\mathbb{F}_q^n\) denote the \(n\)-dimensional vector space over \(\mathbb{F}_q\). For \(x=(x_1,\dots,x_n)\in \mathbb{F}_q^n\), define
\[
d(x)=\sum_{i=1}^n x_i^2.
\]
For a set \(E\subset \mathbb{F}_q^n\), its distance set is
\[
\Delta_d(E)
=
\{d(x-y):\ x,y\in E\}.
\]
The finite field Erd\H{o}s--Falconer distance problem asks for the smallest exponent \(\alpha_n\) such that if $|E|\gg q^{\alpha_n}$, then $|\Delta_d(E)|\gg q.$

A foundational result of Iosevich and Rudnev \cite{IR2007}, proved via Fourier analytic methods, gives $\alpha_n\le \frac{n+1}{2}.$
This bound is sharp in odd dimensions \cite{HIKR2011}. In even dimensions, however, the situation is much more delicate. The known examples only imply the lower bound $\alpha_n\ge \frac n2,$
and it has been conjectured that this lower bound is the correct threshold for all even \(n\).

Despite considerable effort \cite{CEHIK2012,CGKPTZ25,F24,KS2015,KPSV2021,KPV2021,KPP2023,MPPRS2022,PS2020,PVZ2019}, the conjecture in even dimensions remains open. In dimension \(2\), the best known exponent over arbitrary finite fields is \(\frac{4}{3}\) \cite{BHIPR2017,CEHIK2012}, while over prime fields the exponent has been improved to \(\frac{5}{4}\) \cite{MPPRS2022}. In dimension \(4\), Pham and Xue \cite{PX26} recently improved the exponent over prime fields from \(\frac{5}{2}\) to \(\frac{77}{31}\). These developments show that any improvement over the general exponent \(\frac{n+1}{2}\), especially uniformly in all even dimensions and over arbitrary finite fields, is highly nontrivial.

The main purpose of this paper is to establish such a uniform improvement. Moreover, instead of restricting ourselves to the standard Euclidean quadratic form \(d(x)\), we work in a more general setting. Let
\[
Q(x)=x^T A x
\]
be a non-degenerate quadratic form on \(\mathbb{F}_q^{2m}\), where \(A\) is symmetric and invertible. For \(E\subset \mathbb{F}_q^{2m}\), define
\[
\Delta_Q(E)
=
\{Q(x-y):\ x,y\in E\}.
\]
Our main theorem shows that, for every non-degenerate quadratic form in even dimension, a set of size \(q^{m+\frac{1}{3}}\) already determines a positive proportion of all possible distances.

\begin{theorem}\label{mainthm}
	Fix \(0<\alpha<\frac12\). There exist constants \(C_\alpha>0\) and \(q_\alpha\) such that, for every odd prime power \(q\ge q_\alpha\), every integer \(m\ge 1\), every non-degenerate quadratic form \(Q\) on \(\mathbb{F}_q^{2m}\), and every set \(E\subset\mathbb{F}_q^{2m}\) with $|E|\ge C_\alpha q^{m+\frac13},$
	we have
	\[
	|\Delta_Q(E)|>1+\alpha(q-1).
	\]
\end{theorem}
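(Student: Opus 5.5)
We work in the translation association scheme on \(X=\mathbb{F}_q^{2m}\) whose classes are \(R_0=\{(x,x)\}\) and \(R_t=\{(x,y): Q(x-y)=t,\ x\neq y\}\) for \(t\in\mathbb{F}_q\); the class \(t=0\) is split into the isotropic nonzero vectors. We then bound \(|E|\) by a Delsarte linear program. The dual characters are the additive characters \(\psi_\xi(x)=\chi(\xi\cdot x)\). Grouping the \(\xi\) by \(Q^*(\xi)\), where \(Q^*\) is the dual form \(\xi^TA^{-1}\xi/4\), gives the eigenvalues
\[
P_t(\xi)=\sum_{x\ne 0,\,Q(x)=t}\chi(\xi\cdot x).
\]
Since \(n=2m\) is even, the Gauss sum factor is \(\eta((-1)^m\det A)\,q^m\), a sign times \(q^m\). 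Evaluating the sphere sums by the standard computation gives:
\[
P_t(\xi)=q^{2m-1}\delta_{\xi=0}+\epsilon\, q^{m-1}K(t,Q^*(\xi))+(\text{lower-order terms}),
\]
where \(K(a,b)=\sum_{s\ne0}\chi(as+b/s)\) is a Kloosterman sum, with \(|K|\le 2\sqrt q\) for \(ab\ne0\). For \(t\ne0\), \(|P_t(\xi)|\lesssim q^{m-1/2}\) when \(Q^*(\xi)\ne0\) and \(\lesssim q^{m-1}\) when \(\xi\neq0\) is isotropic. This explicit table is the first step.

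Suppose \(D=\Delta_Q(E)\setminus\{0\}\) satisfies \(|D|\le\alpha(q-1)\); we may ignore \(0\), or keep the isotropic class as allowed. Let \(N=\{t\ne 0: t\notin D\}\), so \(|N|\ge(1-\alpha)(q-1)>(q-1)/2\). The Delsarte bound says \(|E|\le F(0)/f(0)\) for any \(F=\sum_t c_t 1_{R_t}\) with \(F(0)=c_0\), with \(c_t\le 0\) for \(t\in\Delta_Q(E)\setminus\{0\}\) (i.e. \(F\le0\) on realized differences), and with \(\widehat F\ge0\), \(\widehat F(0)>0\). The dual form is more convenient: since \(E-E\) avoids \(\{x: Q(x)\in N\}\), we have \(\sum_{x,y\in E}G(x-y)=\sum_\xi \widehat G(\xi)|\widehat{1_E}(\xi)|^2\) for any \(G\) supported on \(\{0\}\cup Q^{-1}(N)\cup\{0\text{-isotropic}\}\). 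We choose
\[
G=1_{\{0\}}+\lambda\sum_{t\in N} w_t\,1_{Q=t},\qquad w_t\ge0,
\]
normalized so that \(\widehat G(\xi)\ge -\text{small}\) away from \(0\), while \(\widehat G(0)=1+\lambda\sum_t w_t|S_t|\approx 1+\lambda q^{2m-1}\sum w_t\). The left side is \(|E|\) (only the diagonal term survives, since off-diagonal differences do not lie in the support where \(G\neq0\), except at \(0\)). The right side is at least \(\widehat G(0)|E|^2/q^{2m}-\max_{\xi\ne0}|\widehat G^-(\xi)|\,|E|\). This yields \(|E|\le q^{2m}(1+\max|\widehat G^-|)/\widehat G(0)\).

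The key step, and the main obstacle, is choosing the weights \(w_t\) on \(N\) so that \(\sum_{t\in N}w_tK(t,b)\) is uniformly small for all \(b\neq 0\), beating the trivial \(\sqrt q\sum w_t\). The constant weight alone gives \(\sum_{t\in N}K(t,b)=-\sum_{t\notin N}K(t,b)\), which is of size at most \(|D|\sqrt q\), too big. Instead I would exploit \(|N|>q/2\) through a quadratic choice: take \(G\) to be the autocorrelation of \(\lambda 1_{Q\in N'}\), or use \(w=1_N * 1_N\)-type weights restricted back to \(N\). Equivalently, I would bound \(\max_b|\sum_{t\in N}K(t,b)|\) on average using \(\sum_b|\sum_t w_tK(t,b)|^2=q\sum|w_t|^2\) (Parseval in \(s\)). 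The aim is an estimate \(\max|\widehat G^-|\lesssim q^{m-1}\cdot q^{1/3}\cdot\)(mass), giving \(|E|\lesssim q^{m+1/3}\). The exponent \(1/3\) should come from balancing three quantities: the sphere mass \(q^{2m-1}|N|\), the isotropic error \(q^{m-1}|N|\), and the Kloosterman error, where the Kloosterman error is handled by a Cauchy–Schwarz/fourth-moment step in which \(|N|>q/2\) ensures that the main term dominates with a margin depending on \(1-2\alpha>0\). This is the source of \(C_\alpha\) and \(q_\alpha\). Finally, if \(|E|\ge C_\alpha q^{m+1/3}\) exceeds this bound, we get a contradiction, so \(|\Delta_Q(E)|>1+\alpha(q-1)\).
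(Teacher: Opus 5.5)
\paragraph{Verdict.} Your framework is sound, but the one step that carries the whole theorem is missing, and the tools you suggest for it cannot supply it.

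\paragraph{What you have, and the gap.} In even dimension the sphere sums are exact: $\widehat{1_{S_t}}(\xi)=\varepsilon_Q q^{m-1}K(tQ^\perp(\xi))$ for $\xi\ne0$, $t\ne0$, with no lower-order terms. Write $w=W\nu$ with $\nu$ a probability vector on $N$, and let $K=(K(st))_{s,t\in\mathbb{F}_q^\times}$. Your bound then reduces to $|E|\lesssim q^m\max(1,\beta)$, where $K\nu\ge-\beta$ if $\varepsilon_Q=1$ (resp.\ $K\nu\le\beta$ if $\varepsilon_Q=-1$). So the theorem amounts to constructing $\nu$ with $\beta=O_\alpha(q^{1/3})$. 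That is exactly the step you call ``the main obstacle'' and leave open. None of the tools you mention reaches it:
\begin{itemize}
\item The correct $L^2$ identity is $\sum_{b\ne0}(Kw)(b)^2=q^2\|w\|_2^2-(q+1)\bigl(\sum_t w_t\bigr)^2$, not $q\sum_t w_t^2$. $L^2$ information alone gives only $\max_b|(K\nu)(b)|\le q\|\nu\|_2$, which is never below $\sqrt q$ when $\nu$ is supported in $N$. That is the Iosevich--Rudnev exponent $m+\frac12$.
\item A fourth-moment version does no better than $\beta\lesssim\sqrt q$, because already $\sum_s K(s)^4\ge (q^2-q-1)^2/(q-1)$, so there is no cancellation beyond the square of the second moment.
\item An autocorrelation $G=g*\tilde g$ is nonnegative and positive on almost all of $\mathbb{F}_q^{2m}$, including realized differences. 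So $\sum_{x,y\in E}G(x-y)=|E|G(0)$ fails.
\item The exponent $\frac13$ does not come from balancing against the isotropic class; that class only costs $O(q^m)$.
\end{itemize}

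\paragraph{Your two formulations are not interchangeable.} The independent-set form you switch to ($G=0$ on realized classes) is a priori more restrictive than the Delsarte form you first wrote ($F\le0$ there). One direction does hold: via $K^2=q^2I-(q+1)J$, a good $\nu$ gives the paper's type of certificate $\mu=(K\nu+\beta\mathbf{1})/(1+(q-1)\beta)$, which satisfies $K\mu=-(q+1-\beta)/(1+(q-1)\beta)$ on $T$. The converse is unclear. In your form the minimax adversary is an arbitrary probability vector on $\mathbb{F}_q^\times$, not one supported on the small set $T$, and the rigidity described below is then lost.

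\paragraph{The missing idea: the paper's third-moment argument, with the adversary supported on $T$.}
\begin{enumerate}
\item The cubic sum is computed exactly: $\sum_{s}K(st_1)K(st_2)K(st_3)=q^2(N-1)+2q+1$, where $N\le2$ is the number of roots of $t_2r^2+(t_1+t_2-t_3)r+t_1$. Hence $|\mathbb{E}(K\nu-\frac1{q-1})^3|\le 6q$ for \emph{every} probability vector $\nu$.
\item Let $\mu$ be a probability vector on $T$ and $m_-=\max\{0,-\min K\mu\}$. Then the probability vector $\nu_-\propto K\mu+m_-\mathbf{1}$ satisfies $K\nu_-=\bigl(q^2\mu-(q+1-m_-)\mathbf{1}\bigr)/(1+(q-1)m_-)$. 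So $K\nu_-$ attains its minimum $-B_-$ at every point outside $T$.
\item Let $X$ have mean $0$, $X\ge-A$, and $X=-A$ on a $(1-\alpha)$-fraction of points, and set $V=\mathbb{E}X^2$. Expanding $(X+A)(X-V/A)^2\ge0$ gives $\mathbb{E}X^3\ge A^3(1-\alpha)(1-2\alpha)/\alpha^2$. Apply this to $X=K\nu_--\frac1{q-1}$.
\end{enumerate}
This yields $B_-\lesssim_\alpha q^{1/3}$, hence $\min K\mu\le-c_\alpha q^{-1/3}$; symmetrically, $\max K\mu\ge c_\alpha q^{-1/3}$. The cube root here is the source of $\frac13$, and $\alpha<\frac12$ is needed so that $1-2\alpha>0$.

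Von Neumann's minimax lemma then gives probability vectors $\mu_\mp$ on the dual levels with $K\mu_-\le-c_\alpha q^{-1/3}$, resp.\ $K\mu_+\ge c_\alpha q^{-1/3}$, on all of $T$. Average the constraints $q^m+\varepsilon_Q\bigl(-1-a_0+\sum_{t\in T}K(st)a_t\bigr)\ge0$ against $\mu_-$ if $\varepsilon_Q=1$. If $\varepsilon_Q=-1$, average against $\mu_+$ and also use the zero-dual-level bound $a_0\le q^{m-1}+|E|/q$. Either way this gives $|E|\ll_\alpha q^{m+1/3}$.

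As it stands, your argument with constant weights yields only $|E|\lesssim_\alpha q^{m+1/2}$.
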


Thus the distance set contains more than an arbitrary fixed proportion \(\alpha<\frac{1}{2}\) of the nonzero field elements, up to the additional possible value \(0\). In particular, Theorem~\ref{mainthm} immediately implies the following consequence.

\begin{corollary}
	Let \(E\subset\mathbb{F}_q^{2m}\). If $|E|\ge C q^{m+\frac13}$
	for a sufficiently large constant \(C\), then
	\[
	|\Delta_Q(E)|\gg q.
	\]
	In particular, for the standard quadratic form $d(x)=\sum_{i=1}^{2m}x_i^2,$
	if \(E\subset\mathbb{F}_q^{2m}\) satisfies $|E|\ge C q^{m+\frac13},$
	then
	\[
	|\Delta_d(E)|\gg q.
	\]
\end{corollary}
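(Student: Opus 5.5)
The corollary follows from Theorem~\ref{mainthm} once a single value of \(\alpha\) is fixed, so the plan is to instantiate the theorem and then check that both the large-\(q\) and small-\(q\) regimes are covered by one constant \(C\).

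\emph{Fix \(\alpha\).} Take \(\alpha=\frac14\), and let \(C_{1/4}\) and \(q_{1/4}\) be the constants supplied by Theorem~\ref{mainthm}.

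\emph{Large \(q\).} Suppose \(q\ge q_{1/4}\) and \(|E|\ge C q^{m+\frac13}\) with \(C\ge C_{1/4}\). Theorem~\ref{mainthm} applies to every non-degenerate \(Q\) on \(\mathbb{F}_q^{2m}\), and gives
\[
|\Delta_Q(E)|>1+\tfrac14(q-1)\ge \tfrac q4 .
\]
So \(|\Delta_Q(E)|\gg q\), with implied constant \(\frac14\), uniformly in \(m\) and \(Q\).

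\emph{Small \(q\), first case.} Suppose \(q<q_{1/4}\). If \(E\) has at least two points \(x\neq y\) with \(Q(x-y)\neq 0\), then \(\Delta_Q(E)\) contains both \(0\) and a nonzero value. Hence \(|\Delta_Q(E)|\ge 2>2q/q_{1/4}\), which is again \(\gg q\), with implied constant \(2/q_{1/4}\).

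\emph{Small \(q\), second case.} Suppose instead that \(Q(x-y)=0\) for all \(x,y\in E\). We rule this out by choosing \(C\) large. Fix \(x_0\in E\). Then \(E-x_0\) lies in the quadric \(\{Q=0\}\), which has at most \(2q^{2m-1}\) points. So we need
\[
C q^{m+\frac13}>2q^{2m-1}.
\]
This fails when \(m\) is large, since the right-hand side grows faster in \(m\). Therefore we use the stronger constraint that \(E-x_0\) is totally isotropic in the difference sense. Such a set has size at most \(q^{m}\), because \(Q(x-y)=0\) for all pairs polarizes to \(B(u,v)=0\) on the span of \(E-x_0\), and a totally isotropic subspace has dimension at most \(m\). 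Since \(C q^{m+\frac13}>q^m\) once \(C\ge1\), this case cannot occur.

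\emph{Conclusion.} Taking \(C=\max(C_{1/4},1)\), every \(E\) with \(|E|\ge C q^{m+\frac13}\) has \(|\Delta_Q(E)|\ge \min\bigl(\tfrac14,\, 2/q_{1/4}\bigr)\,q\). The Euclidean statement is the special case \(A=I\), which is non-degenerate.

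\emph{Main obstacle.} The only delicate point is the polarization claim in the second small-\(q\) case. We use odd characteristic to write
\[
B(u,v)=\tfrac12\bigl(Q(u)+Q(v)-Q(u-v)\bigr)=0
\]
for all \(u,v\in E-x_0\). Then \(Q\) vanishes on the span of \(E-x_0\), so that span is a totally isotropic subspace and has dimension at most \(m\).
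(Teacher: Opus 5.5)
Your proof is correct and is essentially the paper's argument: the paper just observes that Theorem~\ref{mainthm}, applied with a fixed \(\alpha\) (for instance \(\alpha=\frac14\)), gives \(|\Delta_Q(E)|>1+\alpha(q-1)\ge \alpha q\). Your handling of \(q<q_{1/4}\) through the totally isotropic span is valid but unnecessary, because \(E\neq\emptyset\) already gives \(0\in\Delta_Q(E)\), hence \(|\Delta_Q(E)|\ge 1> q/q_{1/4}\) in that range.
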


In terms of the ambient dimension \(n=2m\), our result gives the exponent $\frac n2+\frac13$
for all even dimensions. This improves the general exponent \(\frac{n+1}{2}\) of Iosevich and Rudnev \cite{IR2007} by \(\frac{1}{6}\) in the exponent. In dimension \(2\), it recovers the best known exponent \(\frac{4}{3}\) over arbitrary finite fields. In every even dimension \(n\ge 4\), it gives a genuine improvement over the previously available general bound. In particular, in dimension \(4\), our exponent is \(\frac{7}{3}\), improving upon the exponent \(\frac{5}{2}\) over arbitrary finite fields and also strengthening the recent prime-field exponent \(\frac{77}{31}\) of \cite{PX26}.

The proof is based on the Delsarte linear programming method, originally
introduced in Delsarte's algebraic approach to association schemes in coding
theory \cite{D1973}. This approach brings tools from algebraic
combinatorics into the finite field distance problem and differs from the
Fourier analytic and incidence-geometric methods that have traditionally
been used in this area. More precisely, we analyze the association scheme
naturally induced by the level sets of the quadratic form \(Q\); for standard
background on association schemes, see \cite{BCN1989}.
The relevant spectral information is encoded by Gauss sums, Kloosterman sums, and the associated Kloosterman matrix. By constructing a suitable feasible solution to the corresponding linear program, we obtain an upper bound for the size of a set whose distance set omits many field elements. This yields Theorem~\ref{mainthm}.

The paper is organized as follows. In Section~\ref{pre}, we recall the necessary facts about Gauss sums, Kloosterman sums, and the Kloosterman matrix. In Section~\ref{main}, we apply the Delsarte linear programming bound to prove the main theorem.

\section{Preliminaries}\label{pre}

In this section we collect several elementary facts that will be used in the proof of the main theorem. Throughout the paper, \(q\) is an odd prime power, \(\chi\) denotes a fixed nontrivial additive character of \(\mathbb{F}_q\), and \(\eta\) denotes the quadratic character of \(\mathbb{F}_q\), extended by setting \(\eta(0)=0\).

We first recall some standard facts about Gauss sums. Let
\[
G(\chi):=\sum_{x\in\mathbb{F}_q}\chi(x^2).
\]
Then
\[
G(\chi)^2=\eta(-1)q,
\]
and, for every \(t\in\mathbb{F}_q^\times\),
\[
\sum_{x\in\mathbb{F}_q}\chi(tx^2)=\eta(t)G(\chi).
\]

The following lemma records the exponential sum associated with a non-degenerate quadratic form in even dimension.

\begin{lemma}\label{lem-gauss-qf}
	Let \(Q(x)=x^T A x\) be a non-degenerate quadratic form on \(\mathbb{F}_q^{2m}\), where \(A\) is symmetric and invertible. Then, for every \(t\in\mathbb{F}_q^\times\), $\sum_{x\in\mathbb{F}_q^{2m}}\chi(tQ(x))
	=
	\eta(\det A)\eta(-1)^m q^m.$
\end{lemma}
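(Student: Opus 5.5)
The plan is to diagonalize \(Q\) and reduce to the one-variable Gauss sum identities recalled just above. Since \(q\) is odd and \(A\) is symmetric and invertible, there is an invertible matrix \(P\in GL_{2m}(\mathbb{F}_q)\) such that
\[
P^TAP=\mathrm{diag}(a_1,\dots,a_{2m}),\qquad a_i\in\mathbb{F}_q^\times.
\]
This is the standard diagonalization of quadratic forms in odd characteristic: pick a vector \(v\) with \(Q(v)\ne0\), which exists because \(Q\) is nonzero and \(2\) is invertible, then split off \(v\) and its orthogonal complement and induct. Taking determinants gives \(\det(P)^2\det A=\prod_i a_i\), so \(\eta(\prod_i a_i)=\eta(\det A)\), because \(\eta\) is multiplicative and \(\eta(\det P^2)=1\).

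Next, the substitution \(x=Py\) is a bijection of \(\mathbb{F}_q^{2m}\). Under it the sum factorizes:
\[
\sum_{x}\chi(tQ(x))=\sum_{y}\chi\Bigl(t\sum_{i=1}^{2m}a_iy_i^2\Bigr)=\prod_{i=1}^{2m}\sum_{y_i\in\mathbb{F}_q}\chi(ta_iy_i^2).
\]
Each factor equals \(\eta(ta_i)G(\chi)\) by the recalled identity, since \(ta_i\ne0\). The product is therefore
\[
\eta(t)^{2m}\,\eta\Bigl(\prod_i a_i\Bigr)\,G(\chi)^{2m}.
\]
Here \(\eta(t)^{2m}=1\) because the dimension is even, the middle factor is \(\eta(\det A)\) by the first step, and \(G(\chi)^{2m}=(\eta(-1)q)^m=\eta(-1)^mq^m\). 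This gives the claimed formula, with no dependence on \(t\).

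The only step needing any care is the diagonalization, together with checking that the determinant changes by a square so that \(\eta(\det A)\) is well defined. Both are standard, so I expect no real obstacle. The evenness of the dimension is used precisely to kill the factor \(\eta(t)^{2m}\).
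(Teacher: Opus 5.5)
Your proposal is correct and follows essentially the same approach as the paper: diagonalize $Q$ via a congruence $P^TAP=\mathrm{diag}(a_1,\dots,a_{2m})$, observe that $\eta(\prod_i a_i)=\eta(\det A)$ because the determinant changes by the square $\det(P)^2$, factor the sum into one-variable Gauss sums $\eta(ta_i)G(\chi)$, and conclude using $\eta(t)^{2m}=1$ and $G(\chi)^{2m}=\eta(-1)^m q^m$. You are slightly more explicit than the paper about why the diagonalization exists and where the evenness of the dimension is used, but the argument is the same.
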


\begin{proof}
	Since the characteristic of \(\mathbb{F}_q\) is odd, the quadratic form \(Q\) can be diagonalized. Thus, after an invertible linear change of variables, we may write $Q(x)=\sum_{i=1}^{2m}a_i x_i^2$
	with \(a_i\in\mathbb{F}_q^\times\). Moreover, \(\prod_i a_i\) differs from \(\det(A)\) by a square factor, and therefore $\eta\left(\prod_{i=1}^{2m}a_i\right)=\eta(\det A).$
	It follows that
	\begin{align*}
		\sum_{x\in\mathbb{F}_q^{2m}}\chi(tQ(x))
		&=
		\prod_{i=1}^{2m}
		\sum_{x_i\in\mathbb{F}_q}\chi(ta_i x_i^2)  \\
		&=
		\prod_{i=1}^{2m}\eta(ta_i)G(\chi)  \\
		&=
		\eta(t)^{2m}
		\eta\left(\prod_{i=1}^{2m}a_i\right)
		G(\chi)^{2m}  \\
		&=
		\eta(\det A)\eta(-1)^m q^m.
	\end{align*}
	This proves the lemma.
\end{proof}

We next recall some basic properties of Kloosterman sums. For \(a\in\mathbb{F}_q\), define the classical Kloosterman sum
\[
K(a)=\sum_{x\in\mathbb{F}_q^\times}\chi\left(x+\frac{a}{x}\right).
\]
Notice that \(K(a)\) is real, since $\overline{K(a)}
=
\sum_{x\in\mathbb{F}_q^\times}
\chi\left(-x-\frac{a}{x}\right)
=
K(a).$
We define the Kloosterman matrix
\[
K=(K(st))_{s,t\in\mathbb{F}_q^\times},
\]
whose rows and columns are indexed by \(\mathbb{F}_q^\times\). We also write \(\mathbf{1}\) for the all-one vector indexed by \(\mathbb{F}_q^\times\), and \(J\) for the all-one matrix.

\begin{lemma}\label{lem-1}
	We have $	K\mathbf{1}=\mathbf{1},\
	K^2=q^2I-(q+1)J.$
\end{lemma}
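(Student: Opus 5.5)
Both identities come from expanding the definition of \(K(a)\) and using orthogonality of the additive character \(\chi\), namely \(\sum_{u\in\mathbb{F}_q}\chi(cu)=q\) if \(c=0\) and \(0\) otherwise. As a consequence, \(\sum_{u\in\mathbb{F}_q^\times}\chi(cu)=q-1\) if \(c=0\) and \(-1\) otherwise.

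\textbf{First identity.} For fixed \(s\in\mathbb{F}_q^\times\) we compute
\[
(K\mathbf{1})_s=\sum_{t\in\mathbb{F}_q^\times}\sum_{x\in\mathbb{F}_q^\times}\chi\Bigl(x+\frac{st}{x}\Bigr).
\]
We swap the sums and sum over \(t\) first. For fixed \(x\), the map \(t\mapsto st/x\) is a bijection of \(\mathbb{F}_q^\times\), so the inner sum equals \(-1\). Hence
\[
(K\mathbf{1})_s=-\sum_{x\in\mathbb{F}_q^\times}\chi(x)=-(-1)=1 .
\]

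\textbf{Second identity.} For \(s,u\in\mathbb{F}_q^\times\) we expand
\[
(K^2)_{s,u}=\sum_{t\in\mathbb{F}_q^\times}K(st)K(tu)=\sum_{x,y\in\mathbb{F}_q^\times}\chi(x+y)\sum_{t\in\mathbb{F}_q^\times}\chi\Bigl(t\Bigl(\frac{s}{x}+\frac{u}{y}\Bigr)\Bigr).
\]
By orthogonality, the inner sum equals \(q-1\) when \(s/x+u/y=0\), i.e. \(y=-ux/s\), and \(-1\) otherwise. So we can rewrite it as \(q\cdot[\,y=-ux/s\,]-1\), which gives
\[
(K^2)_{s,u}=q\sum_{x\in\mathbb{F}_q^\times}\chi\Bigl(x\Bigl(1-\frac{u}{s}\Bigr)\Bigr)-\Bigl(\sum_{x\in\mathbb{F}_q^\times}\chi(x)\Bigr)^2 .
\]
The second term is \((-1)^2=1\). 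The first sum equals \(q-1\) if \(u=s\) and \(-1\) otherwise. Therefore
\[
(K^2)_{s,u}=q(q-1)-1=q^2-(q+1)\quad\text{if } s=u,\qquad (K^2)_{s,u}=-q-1\quad\text{if } s\neq u,
\]
which is exactly the \((s,u)\) entry of \(q^2I-(q+1)J\).

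The only place needing care is the bookkeeping: summing over \(t\) first, and rewriting the inner character sum as \(q\cdot[\cdot]-1\) so that the \(\mathbb{F}_q^\times\) sums over \(x\) and \(y\) separate. There is no real obstacle beyond that.
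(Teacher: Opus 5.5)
Your proposal is correct and follows the paper's proof essentially line for line. Both arguments sum over \(t\) first to get \(K\mathbf{1}=\mathbf{1}\). For \(K^2\), both write the inner \(t\)-sum as \(q\cdot[\,s/x+u/y=0\,]-1\), which reduces everything to the single character sum \(\sum_{x}\chi\bigl(x(1-u/s)\bigr)\) that detects whether \(s=u\).
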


\begin{proof}
	For \(s\in\mathbb{F}_q^\times\), we have
	\[\sum_{t\in\mathbb{F}_q^\times}K(st)
	=
	\sum_{x\in\mathbb{F}_q^\times}
	\chi(x)
	\sum_{t\in\mathbb{F}_q^\times}
	\chi\left(\frac{st}{x}\right)  \\
	=
	-\sum_{x\in\mathbb{F}_q^\times}\chi(x)
	=
	1.\]
	Thus \(K\mathbf{1}=\mathbf{1}\).
	
	Next, for \(s,u\in\mathbb{F}_q^\times\),
	\begin{align*}
		(K^2)_{s,u}
		&=
		\sum_{t\in\mathbb{F}_q^\times}K(st)K(tu)  \\
		&=
		\sum_{x,y\in\mathbb{F}_q^\times}
		\chi(x+y)
		\sum_{t\in\mathbb{F}_q^\times}
		\chi\left(t\left(\frac{s}{x}+\frac{u}{y}\right)\right)  \\
		&=
		q
		\sum_{\substack{x,y\in\mathbb{F}_q^\times\\ \frac{s}{x}+\frac{u}{y}=0}}
		\chi(x+y)
		-
		\sum_{x,y\in\mathbb{F}_q^\times}\chi(x+y)  \\
		&=
		q\sum_{x\in\mathbb{F}_q^\times}
		\chi\left(x-\frac{ux}{s}\right)
		-1.
	\end{align*}
	If \(s=u\), this is \(q(q-1)-1=q^2-q-1\). If \(s\ne u\), then $\sum_{x\in\mathbb{F}_q^\times}
	\chi\left(x-\frac{ux}{s}\right)=-1,$
	and hence \((K^2)_{s,u}=-q-1\). Therefore $	K^2=q^2I-(q+1)J.$
\end{proof}

We shall also need the following cubic estimate for Kloosterman sums.

\begin{lemma}\label{lem-2}
	For \(t_1,t_2,t_3\in\mathbb{F}_q^\times\), we have $	\left|
	\sum_{s\in\mathbb{F}_q^\times}
	K(st_1)K(st_2)K(st_3)
	\right|
	\le (q+1)^2.$
\end{lemma}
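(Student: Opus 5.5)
Expanding the Kloosterman sums turns the triple sum into a character sum over $x,y,z\in\mathbb{F}_q^\times$:
\[
S:=\sum_{s\in\mathbb{F}_q^\times}K(st_1)K(st_2)K(st_3)
=\sum_{x,y,z\in\mathbb{F}_q^\times}\chi(x+y+z)\sum_{s\in\mathbb{F}_q^\times}\chi\!\left(s\Big(\frac{t_1}{x}+\frac{t_2}{y}+\frac{t_3}{z}\Big)\right).
\]
The inner sum over $s$ equals $q-1$ when $\frac{t_1}{x}+\frac{t_2}{y}+\frac{t_3}{z}=0$ and $-1$ otherwise. So
\[
S=q\sum_{\substack{x,y,z\in\mathbb{F}_q^\times\\ t_1/x+t_2/y+t_3/z=0}}\chi(x+y+z)\;-\;\Big(\sum_{x\in\mathbb{F}_q^\times}\chi(x)\Big)^3 ,
\]
and the last term is $(-1)^3=-1$. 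It therefore suffices to show that the constrained sum $T$ satisfies $|T|\le q+2$, because then $|S|\le q(q+2)+1=(q+1)^2$.

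To evaluate $T$, I substitute $u=1/x$, $v=1/y$, $w=1/z$, so the constraint becomes the linear relation $t_1u+t_2v+t_3w=0$ with $u,v,w\neq0$. I then detect this relation with a second additive character:
\[
T=\frac1q\sum_{r\in\mathbb{F}_q}\ \prod_{i=1}^{3}\Big(\sum_{u\in\mathbb{F}_q^\times}\chi\big(u^{-1}+r t_i u\big)\Big)=\frac1q\sum_{r\in\mathbb{F}_q}\prod_{i=1}^3 K(rt_i),
\]
where $K(0)=\sum_{u}\chi(u^{-1})=-1$. The $r=0$ term contributes $-1/q$, so $qT=-1+S$. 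Substituting this into the first identity gives $S=qT+1=S$, which is circular. This route alone does not close.

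I would instead parametrize the constraint surface directly. Solving for $w$ gives $w=-(t_1u+t_2v)/t_3$, with the requirement $t_1u+t_2v\neq0$. Writing $v=\lambda u$ and summing first over $u$ for fixed $\lambda\in\mathbb{F}_q^\times$ with $t_1+t_2\lambda\neq0$, we get
\[
\sum_{u}\chi\!\left(u^{-1}\Big(1+\lambda^{-1}-\frac{t_3}{t_1+t_2\lambda}\Big)\right).
\]
This inner sum equals $q-1$ when the bracket $c(\lambda)$ vanishes and $-1$ otherwise. Hence
\[
T=q\cdot\#\{\lambda:\ c(\lambda)=0\}-(\text{number of admissible }\lambda),
\]
and the number of admissible $\lambda$ is at most $q-1$. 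Clearing denominators, $c(\lambda)=0$ becomes $(\lambda+1)(t_1+t_2\lambda)=t_3\lambda$, a quadratic in $\lambda$. This is a nonzero polynomial since its leading coefficient is $t_2\neq0$, so it has at most $2$ roots.

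This gives $T\in[-(q-1),\,2q]$. The upper end is too weak, since we need $|T|\le q+2$. I expect sharpening this to be the main obstacle. The fix is to account exactly: if $c$ has $k\in\{0,1,2\}$ admissible roots, then $T=qk-(N-k)$, where $N$ is the number of admissible $\lambda$, and $N\in\{q-2,q-1\}$ according as $\lambda=-t_1/t_2$ lies in $\mathbb{F}_q^\times$ (it always does, so $N=q-2$). Then $T=(q+1)k-(q-2)$, which lies in $\{-(q-2),\,3,\,q+4\}$.

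In the worst case this gives $|S|\le q(q+4)+1$, which exceeds $(q+1)^2$. So I will recheck the bookkeeping; the point is that the $-1$ terms are tracked exactly rather than bounded. I suspect the correct count yields $T\in\{-(q-2),3,q+4\}$ shifted so that $qT+1$ lands in the range $[-(q+1)^2,(q+1)^2]$. If the $k=2$ case really produces $q^2+4q+1$, I would instead aim for the weaker $O(q^2)$ bound, which is all that is needed downstream.
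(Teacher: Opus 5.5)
Your route is the paper's route. You sum over $s$ to get $S=qT+1$, parametrize the constraint surface by the ratio $\lambda=x_1/x_2$ (the paper's $r$), and reduce to the number $k\le 2$ of roots of $t_2\lambda^2+(t_1+t_2-t_3)\lambda+t_1$. As written, however, the proposal does not prove the lemma: it ends with a wrong value of $T$ and a fallback to an unspecified $O(q^2)$ bound.

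The error is in the ``exact accounting'' step. Each admissible $\lambda$ with $c(\lambda)=0$ contributes $q-1$ to $T$ (the inner sum over $u$), not $q$. Each of the other $N-k$ admissible $\lambda$ contributes $-1$. Hence
\[
T=k(q-1)-(N-k)=qk-N,
\]
which is exactly the formula you wrote a few lines earlier. Your later expression $T=qk-(N-k)$ gives each root weight $q$ instead of $q-1$. With $N=q-2$ the correct formula gives $T=q(k-1)+2\in\{2-q,\,2,\,q+2\}$, so $|T|\le q+2$ and
\[
S=qT+1=q^2(k-1)+2q+1\in\{-q^2+2q+1,\ 2q+1,\ (q+1)^2\},
\]
which is the paper's conclusion. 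Your earlier interval $T\le 2q$ was weak only because you discarded the $-N$ term.

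As a check, take $q=3$ and $t_1=t_2=t_3=1$. Then $K(1)=-1$ and $K(2)=2$, so $S=-1+8=7$. The quadratic is $(\lambda-1)^2$, so $k=1$ and $q^2(k-1)+2q+1=7$, whereas your formula $T=(q+1)k-(q-2)$ would give $S=10$.

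The correction matters. The bound $(q+1)^2$ is attained when $k=2$, so there is no slack for overcounting. A weaker estimate such as $q^2+4q+1$ would not establish the lemma as stated, although it would still suffice for Lemma~\ref{lem-3} and Lemma~\ref{lem-5} after adjusting constants. The detour through a second additive character is, as you noticed, circular and can simply be removed.
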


\begin{proof}
	Expanding the three Kloosterman sums and summing first in \(s\), we obtain
	\begin{align*}
		&\sum_{s\in\mathbb{F}_q^\times}
		K(st_1)K(st_2)K(st_3)  \\
		&=
		\sum_{s\in\mathbb{F}_q^\times}
		\sum_{x_1,x_2,x_3\in\mathbb{F}_q^\times}
		\chi\left(
		x_1+x_2+x_3
		+s\left(\frac{t_1}{x_1}+\frac{t_2}{x_2}+\frac{t_3}{x_3}\right)
		\right)  \\
		&=
		1+
		q
		\sum_{\substack{x_1,x_2,x_3\in\mathbb{F}_q^\times\\
				\frac{t_1}{x_1}+\frac{t_2}{x_2}+\frac{t_3}{x_3}=0}}
		\chi(x_1+x_2+x_3).
	\end{align*}
	To evaluate the remaining sum, write \(x_1=rx_2\), where \(r\in\mathbb{F}_q^\times\). The constraint becomes $\frac{t_1}{rx_2}+\frac{t_2}{x_2}+\frac{t_3}{x_3}=0.$
	Thus \(r\ne -t_1/t_2\), and $x_3=-\frac{t_3rx_2}{t_1+t_2r}.$
	Consequently,
	\begin{align*}
		&\sum_{\substack{x_1,x_2,x_3\in\mathbb{F}_q^\times\\
				\frac{t_1}{x_1}+\frac{t_2}{x_2}+\frac{t_3}{x_3}=0}}
		\chi(x_1+x_2+x_3)  \\
		&=
		\sum_{r\in\mathbb{F}_q^\times\setminus\{-\frac{t_1}{t_2}\}}
		\sum_{x_2\in\mathbb{F}_q^\times}
		\chi\left(
		x_2\left(
		r+1-\frac{t_3r}{t_1+t_2r}
		\right)
		\right).
	\end{align*}
	Let \(N\) be the number of \(r\in\mathbb{F}_q^\times\setminus\{-\frac{t_1}{t_2}\}\) for which $r+1-\frac{t_3r}{t_1+t_2r}=0.$
	Equivalently, \(N\) is the number of roots in
	\(\mathbb{F}_q^\times\setminus\{-\frac{t_1}{t_2}\}\) of the quadratic polynomial $t_2r^2+(t_1+t_2-t_3)r+t_1.$
	Hence \(N\le2\). For those \(r\), the inner sum over \(x_2\) equals \(q-1\); for all other admissible \(r\), it equals \(-1\). Therefore
	\[		\sum_{s\in\mathbb{F}_q^\times}
	K(st_1)K(st_2)K(st_3)
	=
	1+q\bigl(N(q-1)-(q-2-N)\bigr) 
	=
	q^2(N-1)+2q+1.\]
	Since \(N\le2\), the absolute value is at most \((q+1)^2\).
\end{proof}

We now introduce a probabilistic notation that will be convenient in the linear programming argument. A probability vector \(\mu\) on a finite set \(S\) is a vector satisfying
\[
\mu(s)\ge0
\quad\text{for all }s\in S,
\qquad
\sum_{s\in S}\mu(s)=1.
\]
For a function \(f\) on \(S\), we write
\[
\mathbb{E}_S f=\frac{1}{|S|}\sum_{s\in S}f(s)
\]
for the normalized average. When the underlying set is clear, we simply write \(\mathbb{E}f\).

\begin{lemma}\label{lem-3}
	Let \(\mu\) be a probability vector on \(\mathbb{F}_q^\times\), and define $	F(s)=(K\mu)(s)
	=
	\sum_{t\in\mathbb{F}_q^\times}K(st)\mu(t).$
	Then
	\begin{align*}
		&\sum_{s\in\mathbb{F}_q^\times}F(s)=1, \\
		&\sum_{s\in\mathbb{F}_q^\times}F(s)^2
		=
		q^2\sum_{t\in\mathbb{F}_q^\times}\mu(t)^2-q-1, \\
		&\left|
		\sum_{s\in\mathbb{F}_q^\times}F(s)^3
		\right|
		\le (q+1)^2.
	\end{align*}
	Moreover, if $X=F-\frac{1}{q-1}$ or $X=\frac{1}{q-1}-F,$
	then $|\mathbb{E}X^3|\le 6q.$
\end{lemma}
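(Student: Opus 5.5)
The three identities come directly from Lemma~\ref{lem-1} and Lemma~\ref{lem-2}, using the symmetry of \(K\). The bound on \(\mathbb{E}X^3\) then follows by a binomial expansion.

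For the first identity, we have
\[
\sum_s F(s)=\mathbf{1}^TK\mu=(K\mathbf{1})^T\mu=\mathbf{1}^T\mu=1,
\]
since \(K\) is symmetric (its entries \(K(st)\) depend only on \(st\)) and \(K\mathbf{1}=\mathbf{1}\). For the second, we write
\[
\sum_s F(s)^2=\mu^TK^2\mu=q^2\mu^T\mu-(q+1)(\mathbf{1}^T\mu)^2
\]
by Lemma~\ref{lem-1}, and \(\mathbf{1}^T\mu=1\) gives the stated formula. For the third, we expand
\[
\sum_s F(s)^3=\sum_{t_1,t_2,t_3}\mu(t_1)\mu(t_2)\mu(t_3)\sum_s K(st_1)K(st_2)K(st_3).
\]
Lemma~\ref{lem-2} bounds each inner sum by \((q+1)^2\) in absolute value. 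Since \(\mu\ge 0\) and \(\sum\mu=1\), the weights sum to \(1\), so the triangle inequality gives the bound \((q+1)^2\).

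For the last claim it suffices to treat \(X=F-c\) with \(c=\frac1{q-1}\), because replacing \(X\) by \(-X\) only changes the sign of \(\mathbb{E}X^3\). Write \(N=q-1\), \(S_k=\sum_sF(s)^k\), and \(S_0=N\). Expanding \((F-c)^3\) gives
\[
N\,\mathbb{E}X^3=S_3-3cS_2+3c^2S_1-c^3N=S_3-3cS_2+3c^2-c^2,
\]
using \(S_1=1\) and \(c^3N=c^2\). We have \(|S_3|\le(q+1)^2\). For \(S_2\), we need bounds on \(\sum\mu^2\): it satisfies \(\sum\mu^2\le 1\), which gives \(S_2\le q^2-q-1\), and \(S_2\ge0\) trivially. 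Hence
\[
|3cS_2|\le\frac{3(q^2-q-1)}{q-1}\le 3q,
\]
and the remaining term \(2c^2\) is tiny. Therefore
\[
|N\,\mathbb{E}X^3|\le(q+1)^2+3q+1,
\]
and dividing by \(N=q-1\) gives
\[
|\mathbb{E}X^3|\le\frac{q^2+5q+2}{q-1}.
\]

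The only delicate point is confirming that this quotient is at most \(6q\). This requires \(q^2+5q+2\le 6q^2-6q\), that is, \(5q^2-11q-2\ge0\), which holds for all \(q\ge3\). So the constant \(6\) has comfortable room, and no sharper control of \(S_2\) or \(S_3\) is needed beyond the trivial bound \(\sum\mu^2\le1\) and Lemma~\ref{lem-2}.
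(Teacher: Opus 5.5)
Your proposal is correct and takes essentially the same route as the paper. The three identities follow from $K\mathbf{1}=\mathbf{1}$, $K^2=q^2I-(q+1)J$ and Lemma~\ref{lem-2}, and the third-moment bound comes from the binomial expansion together with $\sum\mu^2\le 1$ (the paper phrases this as $\mathbb{E}F^2\le q^2/(q-1)$); your explicit use of $S_2\ge 0$ and your check that $5q^2-11q-2\ge 0$ for $q\ge 3$ fill in arithmetic the paper leaves implicit.
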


\begin{proof}
	The first identity follows from \(K\mathbf{1}=\mathbf{1}\), or equivalently from the fact that every column sum of \(K\) is equal to \(1\).
	
	For the second identity, since \(K\) is symmetric, Lemma~\ref{lem-1} gives
	\[
	\sum_{s\in\mathbb{F}_q^\times}F(s)^2
	=
	\mu^T K^2\mu
	=
	q^2\sum_{t\in\mathbb{F}_q^\times}\mu(t)^2
	-(q+1)\left(\sum_{t\in\mathbb{F}_q^\times}\mu(t)\right)^2=	q^2\sum_{t\in\mathbb{F}_q^\times}\mu(t)^2-q-1.
	\]
	
	For the cubic estimate, we expand \(F(s)^3\) and apply Lemma~\ref{lem-2}:
	\begin{align*}
		\left|
		\sum_{s\in\mathbb{F}_q^\times}F(s)^3
		\right|
		&=
		\left|
		\sum_{t_1,t_2,t_3\in\mathbb{F}_q^\times}
		\mu(t_1)\mu(t_2)\mu(t_3)
		\sum_{s\in\mathbb{F}_q^\times}
		K(st_1)K(st_2)K(st_3)
		\right|  \\
		&\le
		(q+1)^2
		\sum_{t_1,t_2,t_3\in\mathbb{F}_q^\times}
		\mu(t_1)\mu(t_2)\mu(t_3)  \\
		&=
		(q+1)^2.
	\end{align*}
	
	It remains to prove the final estimate. Since $\mathbb{E}F=\frac{1}{q-1},$ $\mathbb{E}F^2
	\le
	\frac{q^2}{q-1},$ and $	|\mathbb{E}F^3|
	\le
	\frac{(q+1)^2}{q-1},$
	we have
	\[	\left|
	\mathbb{E}\left(F-\frac{1}{q-1}\right)^3
	\right|
	\le
	\frac{(q+1)^2}{q-1}
	+
	\frac{3q^2}{(q-1)^2}
	+
	\frac{2}{(q-1)^3}  
	\le 6q.\]
	The estimate for \(X=\frac{1}{q-1}-F\) follows by changing sign.
\end{proof}

The next elementary lemma converts information about the third moment into a pointwise lower bound. It will be used to force \(K\mu\) to have both a substantially negative value and a substantially positive value when \(\mu\) has small support.

\begin{lemma}\label{lem-4}
	Let \(0<\alpha<\frac12\). Let \(X\) be a real-valued function on a finite set \(\Omega\), with $\mathbb{E}_\Omega X=0.$
	Suppose that \(A\ge0\), that $	X\ge -A$
	on \(\Omega\), and that $X=-A$
	on at least \((1-\alpha)|\Omega|\) points. If $|\mathbb{E}_\Omega X^3|\le M,$
	then $	A^3\frac{(1-\alpha)(1-2\alpha)}{\alpha^2}
	\le M.$
\end{lemma}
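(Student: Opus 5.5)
The plan is to show the one-sided bound
\[
\mathbb{E}_\Omega X^3\ \ge\ A^3\frac{(1-\alpha)(1-2\alpha)}{\alpha^2}.
\]
The hypothesis $|\mathbb{E}_\Omega X^3|\le M$ then gives the claim at once. The only work is the lower bound on the third moment, and I expect to get it by a single tangent-line (supporting cubic) inequality rather than by a moment computation.

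\textbf{Setup.} Let $S=\{X=-A\}$ and $T=\Omega\setminus S$. Write $\beta=|S|/|\Omega|\ge 1-\alpha>\frac12$ and $\tau=|T|/|\Omega|=1-\beta\le\alpha$. If $\tau=0$, then $\mathbb{E}X=0$ forces $A=0$, and the statement is trivial. So assume $\tau>0$. The condition $\mathbb{E}X=0$ says that
\[
c:=\frac{1}{|T|}\sum_{T}X=\frac{\beta A}{\tau}
\]
is the average of $X$ over $T$. Since $\beta\ge\tau$, we have $c\ge A\ge0$.

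\textbf{Key step.} I will use the identity
\[
x^3-c^3-3c^2(x-c)=(x-c)^2(x+2c).
\]
This shows $x^3\ge c^3+3c^2(x-c)$ whenever $x\ge -2c$. On $T$ we have $X\ge -A\ge -2c$, so the inequality applies pointwise on $T$. Summing over $T$, the linear terms cancel because $c$ is the mean of $X$ on $T$, giving $\sum_T X^3\ge |T|c^3$. Adding the contribution $-A^3$ from each point of $S$ yields
\[
\mathbb{E}X^3\ \ge\ -\beta A^3+\tau c^3=\beta A^3\Bigl(\frac{\beta^2}{\tau^2}-1\Bigr)=A^3\,\frac{\beta(2\beta-1)}{(1-\beta)^2}.
\]

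\textbf{Conclusion.} The function $\beta\mapsto \beta(2\beta-1)/(1-\beta)^2$ is increasing on $(\frac12,1)$, since each factor is positive and increasing there. Evaluating it at $\beta=1-\alpha$ gives $(1-\alpha)(1-2\alpha)/\alpha^2$, so the displayed lower bound follows. The only delicate point is justifying the tangent inequality on all of $T$ despite possible negative values of $X$ there. This is exactly what the condition $c\ge A/2$ secures, and it holds because $\beta\ge\tau$, which is where the hypothesis $\alpha<\frac12$ enters.
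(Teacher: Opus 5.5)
Your proof is correct, but it takes a different route from the paper's.

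The paper works in two steps.
\begin{itemize}
\item It first applies Cauchy--Schwarz on the complement of the atom $Z=\{X=-A\}$ to get a second-moment bound $V=\mathbb{E}X^2\ge rA^2$, where $r=k/\ell$.
\item It then averages the pointwise inequality $(X+A)(X-V/A)^2\ge 0$ to obtain $\mathbb{E}X^3\ge V^2/A-AV\ge A^3r(r-1)$, using $r>1$ for monotonicity in $V$.
\end{itemize}

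You bypass the second moment entirely. A single tangent-line inequality for $x^3$ at the conditional mean $c$ on $T$, valid on $[-2c,\infty)$, compares $X$ directly with the two-point distribution. Since $r=\beta/\tau$, your intermediate bound $A^3\beta(2\beta-1)/(1-\beta)^2$ is exactly the paper's $A^3r(r-1)$. Both are attained when $X$ is constant on $T$.

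\textbf{What each buys.} The paper's decomposition is more modular: the inequality $\mathbb{E}X^3\ge V^2/A-AV$ holds for any mean-zero $X\ge -A$, whatever the size of the atom. Your argument is shorter and makes the extremal configuration, and hence sharpness, transparent.

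\textbf{A minor correction to your commentary.} The tangent step only needs $c\ge A/2$, i.e.\ $\beta\ge\frac13$. The hypothesis $\alpha<\frac12$ is really used in your final monotonicity step, where $2\beta-1>0$ is required. Your argument does use it there, so this is not a gap.
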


\begin{proof}
	Let $Z=\{\omega\in\Omega:\ X(\omega)=-A\},$
	and put $k=|Z|,$ $\ell=|\Omega|-k.$
	If \(\ell=0\), then \(X\equiv -A\). Since \(\mathbb{E}_\Omega X=0\), we have \(A=0\), and the result is trivial. We may therefore assume that \(\ell>0\).
	
	Since \(\mathbb{E}_\Omega X=0\), we have $\sum_{\omega\notin Z}X(\omega)=kA.$
	Let $V=\mathbb{E}_\Omega X^2.$
	By the Cauchy--Schwarz inequality,
	\[
	|\Omega|V
	=
	\sum_{\omega\in\Omega}X(\omega)^2
	\ge
	kA^2+\frac{k^2A^2}{\ell}
	=
	\frac{|\Omega|kA^2}{\ell}.
	\]
	Thus $V\ge rA^2,$ where $r:=\frac{k}{\ell}.$
	Since \(k\ge(1-\alpha)|\Omega|\), we have $	r=\frac{k}{\ell}\ge\frac{1-\alpha}{\alpha}>1.$
	
	If \(A=0\), there is nothing to prove. Assume \(A>0\). Since \(X+A\ge0\), we have $(X+A)\left(X-\frac{V}{A}\right)^2\ge0.$
	Taking normalized averages and using \(\mathbb{E}_\Omega X=0\), we obtain
	\[
	\mathbb{E}_\Omega X^3
	\ge
	\frac{V^2}{A}-AV=A^3\frac{V}{A^2}\left(\frac{V}{A^2}-1\right)
	\ge
	A^3 r(r-1).
	\]
	Note that $r(r-1)
	\ge
	\frac{1-\alpha}{\alpha}
	\left(
	\frac{1-\alpha}{\alpha}-1
	\right)
	=
	\frac{(1-\alpha)(1-2\alpha)}{\alpha^2}.$
	Since \(|\mathbb{E}_\Omega X^3|\le M\), the desired estimate follows.
\end{proof}

We now derive the key consequence for probability vectors with small support. In the following statement, the constants depend only on \(\alpha\).

\begin{lemma}\label{lem-5}
	Let \(0<\alpha<\frac12\). There exist constants \(c_\alpha>0\) and \(q_\alpha\) such that, for every odd prime power \(q\ge q_\alpha\) and every probability vector \(\mu\) on \(\mathbb{F}_q^\times\) satisfying $|\operatorname{supp}(\mu)|\le \alpha(q-1),$
	we have $-\min_{s\in\mathbb{F}_q^\times}(K\mu)(s)
	\ge
	c_\alpha q^{-\frac{1}{3}}$
	and $\max_{s\in\mathbb{F}_q^\times}(K\mu)(s)
	\ge
	c_\alpha q^{-\frac{1}{3}}.$
\end{lemma}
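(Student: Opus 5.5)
The plan is to apply Lemma~\ref{lem-4} not to \(K\mu\) itself but to \(K\nu\), where \(\nu\) is a probability vector built from \(F=K\mu\). This is natural because Lemma~\ref{lem-1} inverts \(K\) up to a rank-one term: since \(J\mu=\mathbf 1\),
\[
KF=K^2\mu=q^2\mu-(q+1)\mathbf 1 .
\]
As a consequence, \(KF\) is constant, equal to \(-(q+1)\), on the complement \(Z\) of \(\operatorname{supp}(\mu)\). By hypothesis \(|Z|\ge(1-\alpha)(q-1)\), and at every point \(KF\ge -(q+1)\). This is exactly the extremal-value structure that Lemma~\ref{lem-4} requires.

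\textbf{Lower bound for the minimum.} Let \(\delta=-\min_s F(s)\) and set
\[
\nu=\frac{F+\delta\mathbf 1}{D},\qquad D=1+\delta(q-1).
\]
Then \(\nu\ge0\) and \(\sum_s\nu(s)=1\) by Lemma~\ref{lem-3}. We also have \(D>0\): otherwise \(F\) would be constant, equal to \(1/(q-1)\), hence \(\mu=K^{-1}F\) would be uniform, which is impossible when \(|\operatorname{supp}\mu|\le\alpha(q-1)\). So \(\nu\) is a probability vector. From \(K\mathbf 1=\mathbf 1\) we get
\[
K\nu=\frac{q^2\mu-(q+1-\delta)\mathbf 1}{D}.
\]
Now put \(X=K\nu-\frac1{q-1}\). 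It has mean zero by Lemma~\ref{lem-3}, and it satisfies \(X\ge -A\) everywhere, with equality on \(Z\), where
\[
A=\frac{q+1-\delta}{D}+\frac1{q-1}.
\]
Lemma~\ref{lem-3} applied to \(\nu\) gives \(|\mathbb E X^3|\le 6q\). Lemma~\ref{lem-4} then yields \(A^3\le C'_\alpha q\), with \(C'_\alpha=6\alpha^2/((1-\alpha)(1-2\alpha))\).

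Suppose now that \(\delta\le c\,q^{-1/3}\); this includes the case \(\delta<0\), which only makes \(D\) smaller. Then \(D\le 1+c\,q^{2/3}\), so \(A\gtrsim q^{1/3}/c\) and \(A^3\gtrsim q/c^3\). For \(c=c_\alpha\) small and \(q\ge q_\alpha\), this contradicts \(A^3\le C'_\alpha q\). Hence \(\delta\ge c_\alpha q^{-1/3}\).

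\textbf{Lower bound for the maximum.} The argument is symmetric. Let \(\delta'=\max_s F(s)\). Since \(F\) has mean \(1/(q-1)\) and is not constant, \(D'=\delta'(q-1)-1>0\). Set \(\nu'=(\delta'\mathbf 1-F)/D'\), which is again a probability vector. Then
\[
K\nu'=\frac{(\delta'+q+1)\mathbf 1-q^2\mu}{D'},
\]
which attains its maximum exactly on \(Z\). Take \(X=\frac1{q-1}-K\nu'\); this is the sign-flipped case covered by the last clause of Lemma~\ref{lem-3}. Then \(X\) has mean zero, \(X\ge-A'\) with equality on \(Z\), where \(A'=\frac{\delta'+q+1}{D'}-\frac1{q-1}\), and \(|\mathbb EX^3|\le 6q\). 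If \(\delta'\le c\,q^{-1/3}\), then \(A'\approx \frac{q}{\delta' q}\gtrsim q^{1/3}/c\). Lemma~\ref{lem-4} gives the same contradiction, so \(\delta'\ge c_\alpha q^{-1/3}\).

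\textbf{Main obstacle.} The real step is spotting the substitution \(\mu\mapsto\nu\) that moves the "constant on a \((1-\alpha)\)-fraction" structure from \(\mu\) onto \(K\nu\) via \(K^2=q^2I-(q+1)J\). After that, what remains is routine bookkeeping: checking that \(D,D'>0\), that \(\nu,\nu'\) are genuinely probability vectors so that Lemma~\ref{lem-3} applies, and tracking constants so that \(A^3\asymp q/c^3\) beats \(6q\,\alpha^2/((1-\alpha)(1-2\alpha))\) once \(q\ge q_\alpha\).
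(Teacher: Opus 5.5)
Your proposal is correct and follows the paper's proof essentially step for step. It uses the same shifted-and-normalized probability vectors built from $F=K\mu$, the same use of $K^2=q^2I-(q+1)J$ to make $K\nu$ extremal and constant off $\operatorname{supp}(\mu)$, and the same application of Lemma~\ref{lem-4} with the $6q$ third-moment bound from Lemma~\ref{lem-3}. The differences are cosmetic: you let $\delta=-\min F$ be negative and justify $D>0$ by non-constancy of $F$, where the paper truncates at $0$ with $m_-=\max\{0,-\min F\}$, and you phrase the last step as a contradiction rather than the paper's direct case split on $m_\pm\ge q^{-1/3}$.
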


\begin{proof}
	We prove the negative estimate first. Put
	\[
	F=K\mu,
	\qquad
	m_-=\max\left\{0,-\min_{s\in\mathbb{F}_q^\times}F(s)\right\}.
	\]
	Then $F+m_-\mathbf{1}\ge0,$
	and hence $\nu_-=
	\frac{F+m_-\mathbf{1}}{1+(q-1)m_-}$
	is a probability vector on \(\mathbb{F}_q^\times\). Let $D_-=1+(q-1)m_-.$
	Using Lemma~\ref{lem-1}, we get
	\[		K\nu_-
	=
	\frac{K^2\mu+m_-K\mathbf{1}}{D_-}  
	=
	\frac{q^2\mu-(q+1-m_-)\mathbf{1}}{D_-}.\]

	Thus \(K\nu_-\ge -B_-\) everywhere, and equality holds outside \(\operatorname{supp}(\mu)\), where $B_-=\frac{q+1-m_-}{D_-}.$
	
	If \(m_-\ge q^{-\frac{1}{3}}\), then the desired estimate follows after choosing \(c_\alpha\le1\). We may therefore assume that $m_-<q^{-\frac{1}{3}}.$
	For \(q\) sufficiently large, this implies \(q+1-m_-\ge q\).
	
	Now define
	\[
	X_-=K\nu_- -\frac{1}{q-1}.
	\]
	Then \(\mathbb{E}X_-=0\), and by Lemma~\ref{lem-3}, $|\mathbb{E}X_-^3|\le 6q.$
	Moreover, $X_-\ge -A_-,$ and $A_-:=B_-+\frac{1}{q-1},$
	and equality holds outside \(\operatorname{supp}(\mu)\). Since $|\operatorname{supp}(\mu)|\le \alpha(q-1),$
	the equality \(X_-=-A_-\) holds on at least \((1-\alpha)(q-1)\) points. Applying Lemma~\ref{lem-4}, we obtain
	\[
	A_-^3
	\frac{(1-\alpha)(1-2\alpha)}{\alpha^2}
	\le
	6q.
	\]
	Thus $A_-\ll_\alpha q^{\frac{1}{3}}.$
	In particular, $\frac{q+1-m_-}{D_-}\ll_\alpha q^{\frac{1}{3}}.$
	Since \(q+1-m_-\ge q\), it follows that $D_-\gg_\alpha q^{\frac{2}{3}}.$
	Therefore
	\[
	m_-=\frac{D_- -1}{q-1}\gg_\alpha q^{-\frac{1}{3}}.
	\]
	This proves the first estimate.
	
	We now prove the positive estimate. Let
	\[
	m_+=\max_{s\in\mathbb{F}_q^\times}F(s).
	\]
	If \(m_+\ge q^{-\frac{1}{3}}\), then the desired estimate is immediate. Hence we may assume that $m_+<q^{-\frac{1}{3}}.$
	Since \(\mathbb{E}F=\frac{1}{q-1}\), we have \(m_+\ge\frac{1}{q-1}\). Equality would imply \(F\equiv\frac{1}{q-1}\). Applying \(K\) and using Lemma~\ref{lem-1}, this would force \(\mu\) to be the uniform probability vector on \(\mathbb{F}_q^\times\), contradicting $|\operatorname{supp}(\mu)|\le\alpha(q-1)$
	because \(\alpha<\frac{1}{2}\). Hence $D_+:=(q-1)m_+-1>0.$
	Define $\nu_+=\frac{m_+\mathbf{1}-F}{D_+}.$
	Then \(\nu_+\) is a probability vector. Again using Lemma~\ref{lem-1}, we have
	\[K\nu_+
	=
	\frac{m_+K\mathbf{1}-K^2\mu}{D_+}  
	=
	\frac{(q+1+m_+)\mathbf{1}-q^2\mu}{D_+}.\]
	Thus \(K\nu_+\le B_+\) everywhere, and equality holds outside \(\operatorname{supp}(\mu)\), where $B_+=\frac{q+1+m_+}{D_+}.$
	
	Let
	\[
	X_+=\frac{1}{q-1}-K\nu_+.
	\]
	Then \(\mathbb{E}X_+=0\), and Lemma~\ref{lem-3} gives $|\mathbb{E}X_+^3|\le 6q.$
	Moreover, $	X_+\ge -A_+,$ and $A_+:=B_+-\frac{1}{q-1},$
	and equality holds outside \(\operatorname{supp}(\mu)\). Applying Lemma~\ref{lem-4} again yields $A_+\ll_\alpha q^{\frac{1}{3}}.$
	Consequently, $B_+\ll_\alpha q^{\frac{1}{3}}.$
	Since \(q+1+m_+\ge q\), we obtain $D_+\gg_\alpha q^{\frac{2}{3}}.$
	Therefore
	\[
	m_+=\frac{D_+ +1}{q-1}\gg_\alpha q^{-\frac{1}{3}}.
	\]
	This proves the second estimate, and hence the lemma follows.
\end{proof}

The last ingredient is a standard minimax lemma, which will be used to turn the pointwise estimates above into uniform linear programming certificates.

\begin{lemma}[von Neumann minimax lemma]\cite{V1928}\label{lem-6}
	Let \(S,T\) be finite sets, let \((b_{s,t})_{s\in S,t\in T}\) be a real matrix, and let \(\gamma\in\mathbb{R}\). Suppose that, for every probability vector \(\nu\) on \(T\),
	\[
	\max_{s\in S}\sum_{t\in T}b_{s,t}\nu(t)\ge \gamma.
	\]
	Then there exists a probability vector \(\mu\) on \(S\) such that
	\[
	\sum_{s\in S}\mu(s)b_{s,t}\ge \gamma
	\]
	for all \(t\in T\).
\end{lemma}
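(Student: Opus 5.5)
The plan is to derive Lemma~\ref{lem-6} from linear programming duality, or equivalently from the separating hyperplane theorem for polytopes; von Neumann's theorem is exactly this statement. Let \(\Delta_S\) and \(\Delta_T\) denote the simplices of probability vectors on \(S\) and \(T\). Consider the bilinear payoff \(\Phi(\mu,\nu)=\sum_{s,t}\mu(s)b_{s,t}\nu(t)\).

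First, I will note that for fixed \(\nu\), the maximum of the linear function \(\mu\mapsto\Phi(\mu,\nu)\) over \(\Delta_S\) is attained at a vertex. Hence the hypothesis says
\[
\min_{\nu\in\Delta_T}\max_{\mu\in\Delta_S}\Phi(\mu,\nu)\ge\gamma .
\]
Similarly, for fixed \(\mu\), the minimum over \(\nu\) of \(\Phi(\mu,\nu)\) equals \(\min_{t}\sum_s\mu(s)b_{s,t}\). So the conclusion is precisely
\[
\max_{\mu\in\Delta_S}\min_{\nu\in\Delta_T}\Phi(\mu,\nu)\ge\gamma .
\]
The claim therefore reduces to the inequality \(\max_\mu\min_\nu\Phi\ge\min_\nu\max_\mu\Phi\). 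The reverse inequality is trivial, and both extrema exist by compactness.

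To prove this inequality, I will argue by contradiction. Suppose that for every \(\mu\in\Delta_S\) some \(t\) has \(\sum_s\mu(s)b_{s,t}<\gamma\). Define the convex compact set
\[
C=\{(\textstyle\sum_s\mu(s)b_{s,t})_{t\in T}:\mu\in\Delta_S\}\subset\mathbb{R}^T .
\]
This set is the convex hull of the rows of \(B\). The assumption says that \(C\) is disjoint from the closed convex set \(P=\{y:\ y_t\ge\gamma\ \forall t\}\). I separate the compact convex set \(C\) from the closed convex set \(P\) by a hyperplane, obtaining \(w\in\mathbb{R}^T\), \(w\neq0\), and a constant \(c\) with
\[
\langle w,y\rangle<c\le\langle w,z\rangle \qquad\text{for all } y\in C,\ z\in P .
\]
Since \(P\) is invariant under adding any nonnegative vector, the right-hand inequality forces \(w\ge0\) coordinatewise; otherwise \(\langle w,z\rangle\) would be unbounded below on \(P\). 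I then normalize \(\nu=w/\sum_t w_t\in\Delta_T\). Evaluating at \(z=\gamma\mathbf 1\in P\) gives \(\langle \nu,y\rangle<\gamma\) for all \(y\in C\). Applying this to the rows of \(B\) yields \(\sum_t b_{s,t}\nu(t)<\gamma\) for every \(s\in S\), which contradicts the hypothesis.

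The only delicate point is the separation step: I need strict separation of a compact convex set from a closed convex set, together with the sign argument showing \(w\ge0\). Once these are in place the rest is bookkeeping. Alternatively, one could write the problem as the LP \(\max\{\lambda:\ B^T\mu\ge\lambda\mathbf 1,\ \mu\in\Delta_S\}\) and invoke strong LP duality, whose dual is \(\min\{\lambda':\ B\nu\le\lambda'\mathbf 1,\ \nu\in\Delta_T\}\). The hypothesis gives that the dual value is at least \(\gamma\), so the primal value is at least \(\gamma\) as well.
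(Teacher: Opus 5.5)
Your proof is correct. The paper does not prove this lemma: it states it as von Neumann's minimax theorem with the citation \cite{V1928}. Your proposal therefore supplies a self-contained argument where the paper defers to the literature.

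The contradiction argument is complete as written. $C$ is the convex hull of finitely many rows, hence a compact polytope, and $P$ is closed and convex, so strict separation is available. Strictness is genuinely needed: a non-strict hyperplane would only give $\max_{s}\sum_t b_{s,t}\nu(t)\le\gamma$, which does not contradict the hypothesis. Your recession-direction argument for $w\ge 0$ and the evaluation at $z=\gamma\mathbf 1$ then finish the proof.

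Your preliminary reformulation as $\max_\mu\min_\nu\Phi\ge\min_\nu\max_\mu\Phi$ is harmless but unused, since the separation argument proves the stated implication directly.

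The LP-duality variant you sketch fits this paper particularly well. It shows that the certificates $\mu_\pm$ of Lemma~\ref{lem-7} are simply optimal solutions of a small auxiliary linear program, in the same spirit as the Delsarte LP bound in Section~\ref{main}. The paper's citation buys brevity but hides that the lemma is nothing more than finite-dimensional LP duality (equivalently, Farkas' lemma).
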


Combining Lemma~\ref{lem-5} with the minimax lemma gives the following uniform form, which will be used in the proof of the main theorem.

\begin{lemma}\label{lem-7}
	Let \(0<\alpha<\frac12\). There exist constants \(c_\alpha>0\) and \(q_\alpha\) such that, whenever \(q\ge q_\alpha\) and
	\[
	T\subset\mathbb{F}_q^\times,
	\qquad
	1\le |T|\le \alpha(q-1),
	\]
	there exist probability vectors \(\mu_-\) and \(\mu_+\) on \(\mathbb{F}_q^\times\) such that
	\[
	\sum_{s\in\mathbb{F}_q^\times}\mu_-(s)K(st)
	\le
	-c_\alpha q^{-\frac{1}{3}}
	\]
	for all \(t\in T\), and
	\[
	\sum_{s\in\mathbb{F}_q^\times}\mu_+(s)K(st)
	\ge
	c_\alpha q^{-\frac{1}{3}}
	\]
	for all \(t\in T\).
\end{lemma}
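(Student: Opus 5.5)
We derive Lemma~\ref{lem-7} from Lemma~\ref{lem-5} by a direct application of the minimax Lemma~\ref{lem-6}. We take \(c_\alpha\) and \(q_\alpha\) to be the constants provided by Lemma~\ref{lem-5}. Fix \(q\ge q_\alpha\) and a set \(T\subset\mathbb{F}_q^\times\) with \(1\le |T|\le\alpha(q-1)\). We first treat \(\mu_+\), applying Lemma~\ref{lem-6} with \(S=\mathbb{F}_q^\times\), index set \(T\), matrix \(b_{s,t}=K(st)\), and \(\gamma=c_\alpha q^{-1/3}\).

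To check the hypothesis of Lemma~\ref{lem-6}, let \(\nu\) be any probability vector on \(T\). We extend it by zero to a probability vector \(\tilde\nu\) on \(\mathbb{F}_q^\times\). Then \(|\operatorname{supp}(\tilde\nu)|\le|T|\le\alpha(q-1)\), and
\[
\sum_{t\in T}b_{s,t}\nu(t)=(K\tilde\nu)(s).
\]
The second estimate of Lemma~\ref{lem-5} gives \(\max_s (K\tilde\nu)(s)\ge c_\alpha q^{-1/3}=\gamma\). Lemma~\ref{lem-6} therefore produces a probability vector \(\mu_+\) on \(\mathbb{F}_q^\times\) with \(\sum_s\mu_+(s)K(st)\ge c_\alpha q^{-1/3}\) for every \(t\in T\), which is the second required inequality.

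For \(\mu_-\) we apply Lemma~\ref{lem-6} to the matrix \(b_{s,t}=-K(st)\) with the same \(\gamma\). For \(\nu\) and its extension \(\tilde\nu\) as before, we have \(\max_s\bigl(-(K\tilde\nu)(s)\bigr)=-\min_s(K\tilde\nu)(s)\ge c_\alpha q^{-1/3}\) by the first estimate of Lemma~\ref{lem-5}. Lemma~\ref{lem-6} then gives a probability vector \(\mu_-\) with \(\sum_s\mu_-(s)\bigl(-K(st)\bigr)\ge c_\alpha q^{-1/3}\) for every \(t\in T\). Negating yields the first required inequality.

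The argument is essentially bookkeeping, and the one point needing care is the orientation of the matrix in Lemma~\ref{lem-6}. The vectors \(\nu\) in the hypothesis live on \(T\), while the vector produced by the lemma lives on \(S=\mathbb{F}_q^\times\). Because \(K(st)\) is symmetric in \(s\) and \(t\), the quantity \(\sum_{t}b_{s,t}\nu(t)\) is exactly \((K\tilde\nu)(s)\), so Lemma~\ref{lem-5} applies to \(\tilde\nu\) on the nose. The hypothesis \(|T|\ge1\) guarantees that probability vectors on \(T\) exist, so the minimax statement is not vacuous.
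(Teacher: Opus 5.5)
Your proof is correct and matches the paper's own argument. Like the paper, you extend each probability vector $\nu$ on $T$ by zero, invoke the two estimates of Lemma~\ref{lem-5}, and apply the minimax Lemma~\ref{lem-6} with $b_{s,t}=K(st)$ for $\mu_+$ and $b_{s,t}=-K(st)$ for $\mu_-$. One small remark: the symmetry of $K(st)$ is not needed, because $\sum_{t}K(st)\tilde\nu(t)=(K\tilde\nu)(s)$ holds by the definition of the matrix $K$.
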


\begin{proof}
	Let \(\nu\) be a probability vector on \(T\), and extend it by zero to a probability vector on \(\mathbb{F}_q^\times\). Since $|\operatorname{supp}(\nu)|\le |T|\le \alpha(q-1),$
	Lemma~\ref{lem-5} gives $	\max_{s\in\mathbb{F}_q^\times}
	\sum_{t\in T}K(st)\nu(t)
	\ge
	c_\alpha q^{-\frac{1}{3}}.$
	Applying Lemma~\ref{lem-6} with $S=\mathbb{F}_q^\times,$ and $b_{s,t}=K(st),$
	we obtain a probability vector \(\mu_+\) on \(\mathbb{F}_q^\times\) such that
	\[
	\sum_{s\in\mathbb{F}_q^\times}\mu_+(s)K(st)
	\ge
	c_\alpha q^{-\frac{1}{3}}
	\]
	for every \(t\in T\).
	
	Similarly, Lemma~\ref{lem-5} gives
	\[
	\max_{s\in\mathbb{F}_q^\times}
	\sum_{t\in T}(-K(st))\nu(t)
	=
	-\min_{s\in\mathbb{F}_q^\times}
	\sum_{t\in T}K(st)\nu(t)
	\ge
	c_\alpha q^{-\frac{1}{3}}.
	\]
	Applying Lemma~\ref{lem-6} with $b_{s,t}=-K(st),$
	we obtain a probability vector \(\mu_-\) on \(\mathbb{F}_q^\times\) such that
	\[
	\sum_{s\in\mathbb{F}_q^\times}\mu_-(s)K(st)
	\le
	-c_\alpha q^{-\frac{1}{3}}
	\]
	for every \(t\in T\). This completes the proof.
\end{proof}

\section{Proof of Theorem~\ref{mainthm}}\label{main}

In this section we prove the main theorem. We begin by deriving the
Delsarte linear programming constraints associated with the quadratic form
\(Q\).

Let
\[
\varepsilon_Q:=\eta(\det A)\eta(-1)^m.
\]
Thus \(\varepsilon_Q\in\{\pm1\}\). For \(u\in\F_q\), let
\[
S_u=\{x\in\F_q^{2m}:\ Q(x)=u\}.
\]
The diagonal relation, the punctured zero level set \(S_0\setminus\{0\}\),
and the nonzero level sets \(S_t\), \(t\in\F_q^\times\), form the natural
translation association scheme attached to \(Q\). The dual scheme is indexed
by the level sets of the dual quadratic form
\[
Q^\perp(z)=\frac14 z^T A^{-1}z.
\]
The following quantities are the normalized inner distribution of \(E\) with
respect to this association scheme.

Let \(E\subset\F_q^{2m}\). Define
\begin{align*}
	a_0
	&=
	\frac{1}{|E|}
	\left|
	\{(x,y)\in E^2:\ x\ne y,\ Q(x-y)=0\}
	\right|,\\
	a_t
	&=
	\frac{1}{|E|}
	\left|
	\{(x,y)\in E^2:\ x\ne y,\ Q(x-y)=t\}
	\right|,
	\qquad t\in\F_q^\times.
\end{align*}
Let
\[
T=\{t\in\F_q^\times:\ a_t>0\}.
\]
Since the diagonal contributes \(1\) to the normalized inner distribution, we
have
\[
|E|=1+a_0+\sum_{t\in T}a_t.
\]

The next proposition gives the nonzero dual-level constraints in the
Delsarte linear program. Equivalently, it says that the MacWilliams transform
of the inner distribution is nonnegative on every nonzero dual level set.
The Kloosterman sums appearing below are precisely the eigenvalues of the
level-set association scheme on these dual levels.

\begin{proposition}\label{prop-lp-constraints}
	For every \(s\in\F_q^\times\), we have $q^m+\varepsilon_Q
	\left(
	-1-a_0+\sum_{t\in T}K(st)a_t
	\right)
	\ge0.$
\end{proposition}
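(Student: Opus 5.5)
The plan is to derive each inequality from the positivity of a sum of squared character sums over a dual level set. This is the usual way the MacWilliams-transform nonnegativity in Delsarte's method is proved for translation schemes. Fix \(s\in\F_q^\times\) and let \(S^\perp_s=\{z\in\F_q^{2m}:\ Q^\perp(z)=s\}\). The starting point is the trivial inequality
\[
0\le \sum_{z\in S^\perp_s}\Bigl|\sum_{x\in E}\chi(z\cdot x)\Bigr|^2=\sum_{x,y\in E}\lambda_s(x-y),
\qquad
\lambda_s(w):=\sum_{z\in S^\perp_s}\chi(z\cdot w).
\]
Everything then reduces to computing \(\lambda_s(w)\) explicitly and showing that it depends only on \(Q(w)\) and on whether \(w=0\).

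To evaluate \(\lambda_s(w)\), I will detect the condition \(Q^\perp(z)=s\) with additive characters:
\[
\lambda_s(w)=\frac1q\sum_{r\in\F_q}\sum_{z\in\F_q^{2m}}\chi\bigl(r(Q^\perp(z)-s)+z\cdot w\bigr).
\]
The \(r=0\) term contributes \(q^{2m-1}\) if \(w=0\) and \(0\) otherwise.

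For \(r\neq0\), I will complete the square. The substitution \(z\mapsto z-\frac{2}{r}Aw\) turns \(rQ^\perp(z)+z\cdot w\) into \(rQ^\perp(z)-\frac{Q(w)}{r}\); this is a direct check using \(Q^\perp(z)=\frac14 z^TA^{-1}z\). The remaining complete sum \(\sum_z\chi(rQ^\perp(z))\) is handled by Lemma~\ref{lem-gauss-qf}, applied to the form with matrix \(\frac14A^{-1}\). Its determinant is \(4^{-2m}\det(A)^{-1}\), which has the same quadratic character as \(\det A\), so the sum equals \(\varepsilon_Q q^m\). Hence the \(r\ne0\) part is
\[
\varepsilon_Q q^{m-1}\sum_{r\in\F_q^\times}\chi\Bigl(-rs-\frac{Q(w)}{r}\Bigr)=\varepsilon_Q q^{m-1}K\bigl(sQ(w)\bigr).
\]
The last equality follows by replacing \(r\) with \(-r\) and then substituting \(x=rs\). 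Note that \(K(0)=-1\).

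This gives three cases: \(\lambda_s(0)=q^{2m-1}-\varepsilon_Q q^{m-1}\); \(\lambda_s(w)=-\varepsilon_Q q^{m-1}\) when \(w\neq0\) and \(Q(w)=0\); and \(\lambda_s(w)=\varepsilon_Q q^{m-1}K(st)\) when \(Q(w)=t\neq0\).

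Finally, I will split the double sum over \((x,y)\in E^2\) into the diagonal pairs, the off-diagonal pairs with \(Q(x-y)=0\), and the off-diagonal pairs with \(Q(x-y)=t\) for \(t\in T\). By the definitions of \(a_0\) and \(a_t\), this gives
\[
0\le |E|\Bigl(q^{2m-1}-\varepsilon_Q q^{m-1}\Bigr)+\varepsilon_Q q^{m-1}|E|\Bigl(-a_0+\sum_{t\in T}K(st)a_t\Bigr).
\]
Dividing by \(|E|q^{m-1}>0\) yields exactly \(q^m+\varepsilon_Q\bigl(-1-a_0+\sum_{t\in T}K(st)a_t\bigr)\ge0\).

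The main obstacle is the completing-the-square step. There one must get the shift, the sign of \(Q(w)/r\), and the normalization of the dual form exactly right, so that Lemma~\ref{lem-gauss-qf} yields precisely \(\varepsilon_Q q^m\) and the resulting sum is literally the Kloosterman sum \(K(sQ(w))\) with the paper's normalization.
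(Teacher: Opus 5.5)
Your proposal is correct and follows the paper's proof essentially step for step. Both start from positivity of $\sum_{Q^\perp(z)=s}|\widehat{E}(z)|^2$, detect the level set with additive characters, complete the square, apply Lemma~\ref{lem-gauss-qf} to the dual form, and recognize $K(sQ(x-y))$, with $K(0)=-1$ absorbing the diagonal and zero-distance pairs. The differences are cosmetic: you evaluate the kernel $\lambda_s(w)$ pointwise before summing over pairs, and you explicitly check that $\det(\tfrac14A^{-1})$ has the same quadratic character as $\det A$, a step the paper leaves implicit.
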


\begin{proof}
	Let $\widehat{E}(z):=\sum_{x\in E}\chi(z\cdot x).$
	For each \(s\in\F_q^\times\), positivity of the Fourier mass on the dual
	level set \(Q^\perp(z)=s\) gives
	\[
	0
	\le
	\sum_{Q^\perp(z)=s}|\widehat{E}(z)|^2.
	\]
	We now compute this quantity explicitly. By orthogonality of additive
	characters,
	\begin{align*}
		0
		&\le
		\sum_{Q^\perp(z)=s}
		\sum_{x,y\in E}\chi((x-y)\cdot z) \\
		&=
		\frac1q
		\sum_{x,y\in E}
		\sum_{z\in\F_q^{2m}}
		\chi((x-y)\cdot z)
		\sum_{\lambda\in\F_q}
		\chi\bigl(\lambda(Q^\perp(z)-s)\bigr) \\
		&=
		\frac1q
		\sum_{\lambda\in\F_q}
		\chi(-\lambda s)
		\sum_{x,y\in E}
		\sum_{z\in\F_q^{2m}}
		\chi\bigl(\lambda Q^\perp(z)+(x-y)\cdot z\bigr).
	\end{align*}
	The contribution of \(\lambda=0\) is $q^{2m-1}|E|.$
	For \(\lambda\ne0\), completing the square gives
	\[
	\lambda Q^\perp(z)+(x-y)\cdot z
	=
	\lambda Q^\perp\left(z+\frac{2A(x-y)}{\lambda}\right)
	-
	\frac{Q(x-y)}{\lambda}.
	\]
	Hence, using Lemma~\ref{lem-gauss-qf}, we obtain
	\begin{align*}
		0
		&\le
		q^{2m-1}|E|
		+
		\varepsilon_Q q^{m-1}
		\sum_{\lambda\in\F_q^\times}
		\chi(-\lambda s)
		\sum_{x,y\in E}
		\chi\left(-\frac{Q(x-y)}{\lambda}\right) \\
		&=
		q^{2m-1}|E|
		+
		\varepsilon_Q q^{m-1}
		\sum_{x,y\in E}
		K(sQ(x-y)).
	\end{align*}
	
	We now express the last sum in terms of the inner distribution of \(E\).
	Since $K(0)=\sum_{r\in\F_q^\times}\chi(r)=-1,$
	the pairs with \(Q(x-y)=0\), including the diagonal pairs, contribute $-|E|(1+a_0).$
	The pairs with \(Q(x-y)=t\in T\) contribute $|E|K(st)a_t.$
	Therefore $\sum_{x,y\in E}K(sQ(x-y))
	=
	|E|
	\left(
	-1-a_0+\sum_{t\in T}K(st)a_t
	\right).$
	Substituting this into the previous inequality yields
	\[
	0
	\le
	q^{2m-1}|E|
	+
	\varepsilon_Q q^{m-1}|E|
	\left(
	-1-a_0+\sum_{t\in T}K(st)a_t
	\right).
	\]
	Dividing by \(q^{m-1}|E|\) gives the desired inequality.
\end{proof}

When \(\varepsilon_Q=-1\), we shall also need the constraint coming from the
zero dual level. This additional estimate controls the possible size of the
punctured zero-distance part of the inner distribution.

\begin{lemma}\label{lem-9}
	Assume that $\eta(\det A)=-\eta(-1)^m,$
	or equivalently \(\varepsilon_Q=-1\). Then $a_0\le q^{m-1}+\frac{|E|}{q}.$
\end{lemma}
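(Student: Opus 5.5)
We will run the same computation as in Proposition~\ref{prop-lp-constraints}, now for the dual level set with \(s=0\): instead of a fixed nonzero value of \(Q^\perp\), we take all \(z\) with \(Q^\perp(z)=0\). Positivity gives
\[
0\le \sum_{Q^\perp(z)=0}|\widehat{E}(z)|^2 .
\]
Expanding with \(\frac1q\sum_{\lambda}\chi(\lambda Q^\perp(z))\) and completing the square exactly as before, the \(\lambda=0\) term contributes \(q^{2m-1}|E|\). For \(\lambda\ne0\) there is no factor \(\chi(-\lambda s)\), so Lemma~\ref{lem-gauss-qf} leaves
\[
\varepsilon_Q q^{m-1}\sum_{x,y\in E}\sum_{\lambda\in\F_q^\times}\chi\!\left(-\frac{Q(x-y)}{\lambda}\right).
\]
The inner sum equals \(q-1\) when \(Q(x-y)=0\) and \(-1\) otherwise. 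Pairs with \(Q(x-y)=0\), including the diagonal, number \(|E|(1+a_0)\); the remaining pairs number \(|E|^2-|E|(1+a_0)\). The double sum is therefore
\[
q|E|(1+a_0)-|E|^2 .
\]

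Now substitute \(\varepsilon_Q=-1\). This yields
\[
0\le q^{2m-1}|E| - q^{m-1}\bigl(q|E|(1+a_0)-|E|^2\bigr).
\]
Dividing by \(q^{m}|E|\) gives
\[
1+a_0\le q^{m-1}+\frac{|E|}{q},
\]
which implies the claimed bound \(a_0\le q^{m-1}+|E|/q\), and is in fact slightly stronger by one.

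The only delicate point is the sign bookkeeping. We must make sure the completed-square identity and Lemma~\ref{lem-gauss-qf} give the factor \(\varepsilon_Q q^m\) for every \(\lambda\ne0\), independently of \(\lambda\); this holds because \(\eta(\lambda)^{2m}=1\). We must also check that the phase \(\chi(-Q(x-y)/\lambda)\) sums over \(\lambda\in\F_q^\times\) to \(q-1\) or \(-1\) as stated. Once these are verified, the hypothesis \(\varepsilon_Q=-1\) makes the \(a_0\)-term appear with a negative coefficient, which is exactly what turns positivity into an upper bound.
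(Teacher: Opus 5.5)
Your proposal is correct and follows the paper's proof essentially verbatim. Both use Fourier positivity on the zero dual level $Q^\perp(z)=0$, evaluate the $\lambda$-sum as $q-1$ or $-1$, and count $|E|(1+a_0)$ zero-distance pairs, arriving at the same intermediate bound $a_0\le q^{m-1}-1+|E|/q$ that the paper records before relaxing it.
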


\begin{proof}
	We apply the same Fourier positivity argument on the zero dual level
	\(Q^\perp(z)=0\). Since \(\varepsilon_Q=-1\), we get
	\[0
	\le
	\sum_{Q^\perp(z)=0}|\widehat{E}(z)|^2 
	=
	q^{2m-1}|E|
	-
	q^{m-1}
	\sum_{\lambda\in\F_q^\times}
	\sum_{x,y\in E}
	\chi\left(-\frac{Q(x-y)}{\lambda}\right).\]
	For fixed \(x,y\), we have
	\[
	\sum_{\lambda\in\F_q^\times}
	\chi\left(-\frac{Q(x-y)}{\lambda}\right)
	=
	\begin{cases}
		q-1, & \text{if } Q(x-y)=0,\\
		-1, & \text{if } Q(x-y)\ne0.
	\end{cases}
	\]
	The number of pairs \((x,y)\in E^2\) with \(Q(x-y)=0\) is $|E|(1+a_0),$
	where the term \(1\) accounts for the diagonal. Hence
	\begin{align*}
		0
		&\le
		q^{2m-1}|E|
		-
		q^{m-1}
		\left(
		(q-1)|E|(1+a_0)
		-
		\bigl(|E|^2-|E|(1+a_0)\bigr)
		\right) \\
		&=
		q^{2m-1}|E|
		-
		q^{m-1}
		\left(
		q|E|(1+a_0)-|E|^2
		\right).
	\end{align*}
	Dividing by \(q^{m-1}|E|\), we obtain $0\le q^m-q(1+a_0)+|E|.$
	Therefore
	\[
	a_0\le q^{m-1}-1+\frac{|E|}{q}
	\le q^{m-1}+\frac{|E|}{q}.
	\]
	This proves the lemma.
\end{proof}

We now prove the main theorem. The argument is a Delsarte linear programming
bound. The variables \(a_0\) and \(a_t\), \(t\in T\), are the inner
distribution variables, while Proposition~\ref{prop-lp-constraints} gives
the eigenvalue constraints. Lemma~\ref{lem-7} supplies the dual feasible
solutions needed to bound the objective.

\begin{proof}[Proof of Theorem~\ref{mainthm}]
	Let $T=\Delta_Q(E)\cap\F_q^\times.$
	Since \(0=Q(x-x)\in\Delta_Q(E)\), we have $	|\Delta_Q(E)|=1+|T|.$
	It is therefore enough to prove the following contrapositive statement: if $|T|\le \alpha(q-1),$
	then $|E|\le C'_\alpha q^{m+\frac13}$
	for some constant \(C'_\alpha\) depending only on \(\alpha\).
	
	Assume from now on that $|T|\le \alpha(q-1).$
	The inner distribution of \(E\) gives a feasible point of the following
	linear program:
	\[
	\begin{array}{ll}
		\text{maximize}
		&
		1+a_0+\displaystyle\sum_{t\in T}a_t,\\[2mm]
		\text{subject to}
		&
		a_0\ge0,\\[1mm]
		&
		a_t\ge0,\qquad t\in T,\\[1mm]
		&
		q^m+\varepsilon_Q
		\left(
		-1-a_0+\displaystyle\sum_{t\in T}K(st)a_t
		\right)
		\ge0,
		\qquad s\in\F_q^\times.
	\end{array}
	\tag{LP}
	\]
	The objective value is precisely $1+a_0+\sum_{t\in T}a_t=|E|.$
	When \(\varepsilon_Q=-1\), Lemma~\ref{lem-9} gives the additional constraint
	\[
	a_0\le q^{m-1}+\frac{|E|}{q}.
	\tag{Z}
	\]
	
	Let \(c_\alpha>0\) be the constant from Lemma~\ref{lem-7}. By enlarging
	\(q_\alpha\) if necessary, we may assume throughout that $c_\alpha q^{-\frac{1}{3}}\le1$
	and $c_\alpha q^{-\frac{1}{3}}-\frac{2}{q}
	\ge
	\frac{c_\alpha}{2}q^{-\frac{1}{3}}.$
	
	We distinguish two cases according to the sign of \(\varepsilon_Q\).
	
	\medskip
	
	\noindent
	\textbf{Case 1: \(\varepsilon_Q=1\).}
	
	In this case, the Delsarte constraints become
	\[
	q^m-1-a_0+\sum_{t\in T}K(st)a_t\ge0
	\]
	for every \(s\in\F_q^\times\).
	
	If \(T=\emptyset\), then the above inequality gives $a_0\le q^m-1.$
	Thus $|E|=1+a_0\le q^m,$
	which is more than sufficient.
	
	Now suppose that \(T\ne\emptyset\). Since $1\le |T|\le \alpha(q-1),$
	Lemma~\ref{lem-7} gives a probability vector \(\mu_-\) on
	\(\F_q^\times\) such that $\sum_{s\in\F_q^\times}\mu_-(s)K(st)
	\le
	-c_\alpha q^{-\frac{1}{3}}$
	for every \(t\in T\). This probability vector is the dual feasible
	certificate for the linear program in the present case.
	
	Averaging the Delsarte constraints against \(\mu_-\), we obtain
	\begin{align*}
		0
		&\le
		q^m-1-a_0
		+
		\sum_{t\in T}
		\left(
		\sum_{s\in\F_q^\times}\mu_-(s)K(st)
		\right)
		a_t \\
		&\le
		q^m-1-a_0
		-
		c_\alpha q^{-\frac{1}{3}}
		\sum_{t\in T}a_t.
	\end{align*}
	Therefore
	\[
	a_0+c_\alpha q^{-\frac{1}{3}}\sum_{t\in T}a_t
	\le
	q^m-1.
	\]
	Since \(c_\alpha q^{-\frac{1}{3}}\le1\), we have
	\[
	c_\alpha q^{-\frac{1}{3}}
	\left(
	a_0+\sum_{t\in T}a_t
	\right)
	\le
	a_0+c_\alpha q^{-\frac{1}{3}}\sum_{t\in T}a_t.
	\]
	Using \(|E|=1+a_0+\sum_{t\in T}a_t\), it follows that
	\[
	c_\alpha q^{-\frac{1}{3}}(|E|-1)
	\le
	q^m-1.
	\]
	Hence $|E|\le 1+c_\alpha^{-1}q^{m+\frac13}.$
	Thus $|E|\ll_\alpha q^{m+\frac13}.$
	
	\medskip
	
	\noindent
	\textbf{Case 2: \(\varepsilon_Q=-1\).}
	
	In this case, the Delsarte constraints become
	\[
	q^m+1+a_0-\sum_{t\in T}K(st)a_t\ge0
	\]
	for every \(s\in\F_q^\times\).
	
	If \(T=\emptyset\), then \(|E|=1+a_0\). By Lemma~\ref{lem-9}, $a_0\le q^{m-1}+\frac{|E|}{q}.$
	Therefore $|E|
	\le
	1+q^{m-1}+\frac{|E|}{q},$
	and hence $|E|
	\le
	\frac{q^m+q}{q-1}.$
	This is again more than sufficient.
	
	Now suppose that \(T\ne\emptyset\). By Lemma~\ref{lem-7}, there exists a
	probability vector \(\mu_+\) on \(\F_q^\times\) such that $\sum_{s\in\F_q^\times}\mu_+(s)K(st)
	\ge
	c_\alpha q^{-\frac{1}{3}}$
	for every \(t\in T\). This gives the dual feasible certificate in the
	negative-sign case.
	
	Averaging the Delsarte constraints against \(\mu_+\), we get
	\begin{align*}
		0
		&\le
		q^m+1+a_0
		-
		\sum_{t\in T}
		\left(
		\sum_{s\in\F_q^\times}\mu_+(s)K(st)
		\right)
		a_t \\
		&\le
		q^m+1+a_0
		-
		c_\alpha q^{-\frac{1}{3}}
		\sum_{t\in T}a_t.
	\end{align*}
	Thus
	\[
	c_\alpha q^{-\frac{1}{3}}\sum_{t\in T}a_t
	\le
	q^m+1+a_0.
	\]
	Since \(c_\alpha q^{-\frac{1}{3}}\le1\), we have
	\begin{align*}
		c_\alpha q^{-\frac{1}{3}}(|E|-1)
		&=
		c_\alpha q^{-\frac{1}{3}}
		\left(
		a_0+\sum_{t\in T}a_t
		\right) \\
		&\le
		q^m+1+(1+c_\alpha q^{-\frac{1}{3}})a_0 \\
		&\le
		q^m+1+2a_0.
	\end{align*}
	Using Lemma~\ref{lem-9}, we obtain
	\[
	c_\alpha q^{-\frac{1}{3}}(|E|-1)
	\le
	q^m+1+2q^{m-1}+\frac{2|E|}{q}.
	\]
	Rearranging gives
	\[
	\left(
	c_\alpha q^{-\frac{1}{3}}-\frac{2}{q}
	\right)|E|
	\le
	q^m+1+2q^{m-1}+c_\alpha q^{-\frac{1}{3}}.
	\]
	By our choice of \(q_\alpha\),
	\[
	c_\alpha q^{-\frac{1}{3}}-\frac{2}{q}
	\ge
	\frac{c_\alpha}{2}q^{-\frac{1}{3}}.
	\]
	Therefore
	\[
	|E|
	\le
	\frac{2}{c_\alpha}
	q^{\frac{1}{3}}
	\left(
	q^m+1+2q^{m-1}+c_\alpha q^{-\frac{1}{3}}
	\right)
	\ll_\alpha
	q^{m+\frac13}.
	\]
	
	Combining the two cases, we have shown that if $|\Delta_Q(E)\cap\F_q^\times|\le \alpha(q-1),$
	then $|E|\le C'_\alpha q^{m+\frac13}.$
	Choosing the constant \(C_\alpha\) in the statement of the theorem larger
	than \(C'_\alpha\), we obtain the contrapositive: if $|E|\ge C_\alpha q^{m+\frac13},$
	then
	\[
	|\Delta_Q(E)\cap\F_q^\times|>\alpha(q-1).
	\]
	Since \(0\in\Delta_Q(E)\), this implies
	\[
	|\Delta_Q(E)|>1+\alpha(q-1).
	\]
	The proof of Theorem~\ref{mainthm} is complete.
\end{proof}

\section*{Acknowledgements}
Tao Zhang is partially supported by National Natural Science Foundation of China (Grant No. 12571357),
Natural Science Basic Research Program of Shaanxi  (Program No. 2025JC-YBMS-048).

\end{document}